\newcommand{\numberseries}{\bfseries}   %Fontseries used for numbering
\newlength{\thmtopspace}                %Space above theorem
\newlength{\thmbotspace}                %Space below theorem
\newlength{\thmheadspace}               %Space after theorem label
\newlength{\thmindent}                  %For indenting
\newtheoremstyle{fixed bf head,slanted body}
                {\thmtopspace}{\thmbotspace}{\slshape}
                {\thmindent}{\bfseries}{.}{\thmheadspace}
                {{\numberseries \thmnumber{#2\;}}\thmname{#1}\thmnote{ (#3)}}
\newtheoremstyle{variable bf head,slanted body}
                {\thmtopspace}{\thmbotspace}{\slshape}
                {\thmindent}{\bfseries}{.}{\thmheadspace}
                {{\numberseries \thmnumber{#2\;}}\thmname{#1}\thmnote{ #3}}
\newtheoremstyle{fixed bf head,upright body}
                {\thmtopspace}{\thmbotspace}{\upshape}
                {\thmindent}{\bfseries}{.}{\thmheadspace}
                {{\numberseries \thmnumber{#2\;}}\thmname{#1}\thmnote{ (#3)}}
\newtheoremstyle{numbered paragraph}
                {\thmtopspace}{\thmbotspace}{\upshape}
                {\thmindent}{\upshape}{}{\thmheadspace}
                {{\numberseries \thmnumber{#2.}}}
\def\UL@putbox{\ifx\UL@start\@empty \else % not inner
  \vrule\@width\z@ \LA@penalty\@M
  {\UL@skip\wd\UL@box \UL@leaders \kern-\UL@skip}%
    \phantom{\box\UL@box}%
  \fi}
\DeclareMathOperator{\md}{\operatorname{\mathsf{mod}}}
\theoremstyle{plain} %default
\newtheorem{thm}{Theorem}[section]
\newtheorem{prop}[thm]{Proposition}
\newtheorem{cor}[thm]{Corollary}
\newtheorem{conjecture}[thm]{Conjecture}
\newtheorem{lem}[thm]{Lemma}
\theoremstyle{definition}
\newtheorem{eg}[thm]{Example}
\newtheorem{ques}[thm]{Question}
\newtheorem{rmk}[thm]{Remark}
\numberwithin{equation}{section}
\newcommand{\lra}{\longrightarrow}
\def\PF{\mathsf{PF}}
\newcommand{\fm}{\mathfrak{m}}
\newcommand{\fn}{\mathfrak{n}}
\newcommand{\fp}{\mathfrak{p}}
\newcommand{\fq}{\mathfrak{q}}
\newcommand{\CC}{\mathbb{C}}
\newcommand{\ZZ}{\mathbb{Z}}
\newcommand{\QQ}{\mathbb{Q}}
\newcommand{\up}[1]{{{}^{#1}\!}}
\newtheorem{chunk}[thm]{\hspace*{-1.065ex}\bf}
\DeclareMathOperator{\Ass}{Ass}
\def\CI{\operatorname{\mathsf{CI-dim}}}
\def\Gdim{\operatorname{\mathsf{G-dim}}}
\def\AB-dim{\operatorname{\mathsf{H-dim}}}
\def\cx{\operatorname{\mathsf{cx}}}
\def\pd{\operatorname{\mathsf{pd}}}
\def\depth{\operatorname{\mathsf{depth}}}
\def\edim{\operatorname{\mathsf{embdim}}}
\def\Hom{\operatorname{\mathsf{Hom}}}
\def\Tr{\mathsf{Tr}\hspace{0.01in}}
\def\Ext{\operatorname{\mathsf{Ext}}}
\def\Spec{\operatorname{\mathsf{Spec}}}
\def\Tor{\operatorname{\mathsf{Tor}}}
\def\G{\operatorname{\mathsf{G}}}
\def\dim{\operatorname{\mathsf{dim}}}
\def\p{\mathfrak{p}}
\DeclareMathOperator{\coker}{coker}
\DeclareMathOperator{\Min}{Min}
\def\PF{\mathsf{PF}}
\def\len{\operatorname{\mathsf{length}}}
\DeclareMathOperator{\Supp}{Supp}
\newcommand{\tf}[2]{{\boldsymbol\bot}_{#1}{#2}}
\newcommand{\tp}[2]{{\boldsymbol\top}_{\hskip-2pt #1}{#2}}
\def\urltilda{\kern -.15em\lower .7ex\hbox{\~{}}\kern .04em}
\def\urldot{\kern -.10em.\kern -.10em}\def\urlhttp{http\kern -.10em\lower -.1ex
\hbox{:}\kern -.12em\lower 0ex\hbox{/}\kern -.18em\lower 0ex\hbox{/}}
\begin{document}
\vspace*{-0.1ex}

\title[On the vanishing of theta invariant ]{On the vanishing of theta invariant \\ and a conjecture of Huneke and Wiegand}

\author[O. Celikbas]{Olgur Celikbas}
\address{Olgur Celikbas\\
Department of Mathematics \\
West Virginia University\\
Morgantown, WV 26506-6310, U.S.A}
\email{olgur.celikbas@math.wvu.edu}

\keywords{Complete intersection dimension, complexity, Tor and torsion, theta invariant, tensor products of modules}
\thanks{2010 {\em Mathematics Subject Classification.} Primary 13D07; Secondary 13C13, 13C14, 13H10}
%\subjclass[2000]{13D07, 13C14, 13C15}

%\date{\today}

\maketitle

\setcounter{tocdepth}{1}
%$M\otimes_{R}\Hom_R(M,R)$ 
\begin{abstract} Huneke and Wiegand conjectured that, if $M$ is a finitely generated, non-free, torsion-free module with rank over a one-dimensional Cohen-Macaulay local ring $R$, then the tensor product of $M$ with its algebraic dual has torsion. This conjecture, if $R$ is Gorenstein, is a special case of a celebrated conjecture of Auslander and Reiten on the vanishing of self extensions that stems from the representation theory of finite-dimensional algebras.

If $R$ is a one-dimensional Cohen-Macaulay ring such that $R=S/(f)$ for some local ring $(S, \mathfrak{n})$, and a non zero-divisor $f \in \mathfrak{n}^2$ on $S$, we make use of Hochster's theta invariant and prove that such $R$-modules $M$ which have finite projective dimension over $S$ satisfy the proposed torsion condition of the conjecture. Along the way we give several applications of our argument pertaining to torsion properties of tensor products of modules.
%that are cokernels of reduced matrix factorizations of $f$ over $S$ satisfy the proposed torsion condition of the conjecture. Along the way we give various applications of our argument pertaining to torsion properties of tensor products of modules.
%$(S, \mathfrak{n})$ is an embedded deformation of $R$, that is, $R=S/(f)$ for some non zero-divisor $0\neq f \in \mathfrak{n}^2$ on $S$
\end{abstract}

%\tableofcontents

\section{Introduction}

Throughout $R$ denotes a commutative Noetherian local ring with unique maximal ideal $\fm$ and residue field $k$, and $\md R$ denotes the category of all finitely generated $R$-modules. 

The aim of this paper is to study the torsion-freeness property of tensor products of  modules, a subtle topic which mainly stems from the beautiful work of Auslander \cite{Au}. Our focus is on the torsion submodule of tensor products of the form $M\otimes_{R}M^{\ast}$ over one-dimensional Cohen-Macaulay local rings $R$, where $M^{\ast}$ denotes $\Hom_R(M,R)$. In particular, we are concerned with the following long-standing conjecture of Huneke and Wiegand.%;  see \cite[4.6]{HW1}.
%on pages 473--474 
\begin{conjecture} \label{HWC} (Huneke and Wiegand \cite[4.6]{HW1}) Let $R$ be a one-dimensional local ring and let $M \in \md R$ be a torsion-free module. Assume $M$ has rank (e.g., $R$ is a domain). If $M\otimes_{R}M^{\ast}$ is torsion-free, then $M$ is free. In other words, if $M$ is not free, then $M\otimes_{R}M^{\ast}$ has torsion, i.e., the torsion submodule of $M\otimes_{R}M^{\ast}$ is not zero.%, or equivalently, $\depth(M\otimes_RM^{\ast})=0$.
\end{conjecture}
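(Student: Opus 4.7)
Since Conjecture 1.1 is open in general, I restrict to the special case signposted by the abstract: $R=S/(f)$ with $(S,\fn)$ local and $f\in\fn^{2}$ a non zero-divisor on $S$, and $\pd_{S}M<\infty$. The plan is to argue by contradiction, assuming $M$ is torsion-free of rank, not free, and yet $M\otimes_{R}M^{\ast}$ is torsion-free, and to exploit Hochster's theta invariant $\theta^{R}$, which is well-behaved exactly in the hypersurface setting.

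First I would convert torsion-freeness into Tor vanishing. Presenting $M$ as $0\to K\to F\to M\to 0$ with $F$ free, tensoring with $M^{\ast}$, and using that $K$ is torsion-free while $M^{\ast}$ has rank, one sees that torsion in $\Tor_{1}^{R}(M,M^{\ast})$ would force torsion in $M\otimes_{R}M^{\ast}$. Combined with the one-dimensional Cohen-Macaulay hypothesis and the rank of $M$, torsion-freeness of $M\otimes_{R}M^{\ast}$ yields $\Tor_{1}^{R}(M,M^{\ast})=0$. Next, since $R$ is a hypersurface, any pair of finitely generated modules has eventually $2$-periodic Tor, and $M$ is locally free on the punctured spectrum (being torsion-free of rank over a one-dimensional ring), so the lengths appearing in
\[
\theta^{R}(M,M^{\ast}) \;=\; \len_{R}\Tor_{2i}^{R}(M,M^{\ast})\,-\,\len_{R}\Tor_{2i+1}^{R}(M,M^{\ast}),\qquad i\gg 0,
\]
are all finite. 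The hypothesis $\pd_{S}M<\infty$ is now the key lever: via the change-of-rings short exact sequence $0\to S\xrightarrow{f}S\to R\to 0$ and a finite $S$-free resolution of $M$, an Avramov--Buchweitz style computation should show that the even and odd Tor-lengths agree for $i\gg 0$, forcing $\theta^{R}(M,M^{\ast})=0$.

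Having $\theta^{R}(M,M^{\ast})=0$ together with $\Tor_{1}^{R}(M,M^{\ast})=0$, I would invoke a rigidity theorem for the theta invariant on hypersurfaces (of the type proved by Dao) to promote the degree-one vanishing to $\Tor_{i}^{R}(M,M^{\ast})=0$ for every $i\geq 1$. For a module of finite complete intersection dimension over $R$---which $M$ has, since $\pd_{S}M<\infty$---persistent Tor-vanishing against the algebraic dual forces $\pd_{R}M<\infty$; and since $M$ is maximal Cohen-Macaulay (torsion-free over a one-dimensional CM ring), Auslander--Buchsbaum then gives $\pd_{R}M=0$, contradicting the non-freeness of $M$. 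The step I expect to be most delicate is producing $\theta^{R}(M,M^{\ast})=0$ from $\pd_{S}M<\infty$ alone: the condition $f\in\fn^{2}$ must enter essentially (otherwise $R$ would be a regular quotient of $S$ and $\theta$ would vanish trivially for the wrong reason), and the bookkeeping must line up even and odd Tor-lengths without extra finite-length hypotheses on $M^{\ast}$.
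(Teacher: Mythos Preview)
Your overall strategy---reduce to $\theta^{R}(M,M^{\ast})=0$, then upgrade to vanishing of all $\Tor$, then conclude freeness---matches the paper's, but two of your steps have genuine gaps.

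The critical one is your appeal to ``a rigidity theorem for the theta invariant on hypersurfaces (of the type proved by Dao)''. Dao's rigidity result requires $S$ to be an \emph{unramified regular} local ring; the paper explicitly notes this and points out that, in the setting of Corollary~\ref{sonucbir} where $S$ is an arbitrary local ring, this tool is unavailable. The paper's substitute is the heart of Theorem~\ref{Main}: replace $N=M^{\ast}$ by its maximal Cohen--Macaulay approximation $Z$ via the Avramov--Martsinkovsky sequence $0\to L\to Z\to N\to 0$ with $L$ free and $\CI_{R}(Z)=0$, so that $\Tor_{i}^{R}(M,Z)\cong\Tor_{i}^{R}(M,N)$ for $i\geq 1$ and $M\otimes_{R}Z$ is torsion-free; then the change-of-rings sequence (\ref{CRS}) together with $\pd_{S}(Z)=1$ forces $\Tor_{2}^{R}(M,Z)=0$ directly, and the $2$-periodicity plus $\theta^{R}(M,Z)=0$ propagates this to all degrees. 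No rigidity theorem is invoked.

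Your first step is also not right as written. From $0\to K\to F\to M\to 0$ one gets $\Tor_{1}^{R}(M,M^{\ast})\hookrightarrow K\otimes_{R}M^{\ast}$, not into $M\otimes_{R}M^{\ast}$; torsion-freeness of $M\otimes_{R}M^{\ast}$ says nothing about $K\otimes_{R}M^{\ast}$, so you cannot conclude $\Tor_{1}=0$ this way. The paper never isolates $\Tor_{1}$ first; it obtains $\Tor_{2}=0$ via the mechanism above. Finally, the condition $f\in\fn^{2}$ plays no role in the vanishing of $\theta$---the paper gets $\theta^{R}(M,M^{\ast})=0$ simply from $M$ having rank, hence $[M]=0$ in $\overline{\G}(R)_{\QQ}$, together with additivity of $\theta$; the hypothesis $f\in\fn^{2}$ only guarantees that nontrivial reduced matrix factorizations exist, so that the corollary is not vacuous.
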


Recall that a module $M\in \md R$ is said to have \emph{rank} if there is an integer $r$ such that $M_{\fp}\cong R_{\fp}^{\oplus r}$ for all $\p \in \Ass(R)$, where $\Ass(R)$ is the set of all associated primes of $R$. %For example, modules over a domain, or modules that have finite projective dimension have rank.  

Conjecture \ref{HWC} stems from the seminal works of Auslander \cite{Au}, and Huneke and Wiegand \cite{HW1}. The conjecture is true over hypersurface rings \cite[3.7]{HW1}, but it is very much open in general, even for ideals over complete intersection domains of codimension two. It is worth noting that Conjecture \ref{HWC} is a special, and restrictive, version of the celebrated conjecture of Auslander and Reiten \cite{AuRe} on the vanishing of $\Ext$ when the ring in question is a one-dimensional Gorenstein domain; see \cite{CeRo} for details.

There is strong evidence that Conjecture \ref{HWC} should be true over complete intersections; see \cite{CeRo, HSW}. Moreover, there are various examples supporting the conjecture over rings that are not necessarily complete intersections. For example, it  is proved in \cite[3.6]{HSW} that Conjecture \ref{HWC} is true over Cohen-Macaulay rings with minimal multiplicity, e.g., over local \emph{Arf} rings \cite{Lipman}. For some further examples, we refer to \cite{CGTT} and point out the following:
%It is worth noting that Conjecture \ref{HWC} would fail if there is a one-dimensional, reduced local ring $R$ with a canonical module $\omega$ such that $\omega \ncong R$ and  $(\Tr\Omega\Tr\Omega \omega)^{\ast} \cong \Tr\Omega\Tr\Omega \omega$; see section 3. On the other hand, 
%Conjecture \ref{HWC}  is known to hold over hypersurface rings, i.e., over complete intersections of codimension at most one, and there is quite strong evidence that it should hold over complete intersection rings of arbitrary codimension; see \cite{CeRo}. Also, there are several examples of modules supporting Conjecture \ref{HWC}. For example,

\begin{eg} \label{propA} Let $R$ be a one-dimensional, reduced, non-regular, local ring.
\begin{enumerate} [\rm(i)]
\item If $R$ is complete, and has prime characteristic $p$ and perfect residue field $k$, then $\up{\varphi^n}R\otimes_R (\up{\varphi^n}R)^{\ast}$ has torsion for all $n\gg 0$. Here $\varphi^n: R \to R$ is the $n$th iterate of the \emph{Frobenius} endomorphism given by $r\mapsto r^{p^n}$, and 
$\up{\varphi^n}R$ denotes $R$ with the $R$-action given by $r\cdot s=r^{p^n}s$ for all $r,s \in R$; see \cite[2.15]{CGTT} and \cite[2.1.3 and 2.2.12]{Millercont}.

\item If $R$ is a Gorenstein domain and $I$ is an \emph{Ulrich} ideal of $R$ that contains a parameter ideal as a reduction (e.g., $R = \CC[\![t^4, t^5, t^6]\!]$ and $I=(t^4, t^6)$), then $I$ is a self-dual $R$-module, i.e., $I \cong I^{\ast}$, and so $I\otimes_RI^{\ast}$ has torsion; see Example \ref{ExUlrich} and Remark  \ref{kanitla}. %\cite[6.3]{GotoUlrich} and \ref{kanitla}.%Corollary \ref{extend2}, \ref{Ulrichrmk} and \ref{kanitla}.
\end{enumerate}
\end{eg}

The purpose of this paper is to prove Theorem \ref{MainIntro} and give some observations about Conjecture \ref{HWC}; see Theorem \ref{Main} for a higher dimensional version of the next result.
 
\begin{thm} \label{MainIntro} Let $R$ be a one-dimensional Cohen-Macaulay local ring such that $R=S/(f)$ for some local ring $(S, \fn)$ and a non zero-divisor $f \in \fn$ on $S$. Let $M$ and $N$ be nonzero $R$-modules, and assume the following conditions hold:
\begin{enumerate} [\rm(i)]
\item $\pd_S(M)<\infty$, or $\pd_S(N)<\infty$ (e.g., $S$ is regular).
\item $\len(\Tor_i^R(M,N))<\infty$ for all $i\gg 0$ (e.g., $R$ is reduced).
\item $\theta^R(M,N)=0$, i.e., $\len(\Tor_{2n+2}^{R}(M,N))=\len(\Tor_{2n+1}^{R}(M,N))$ for some $n\gg 0$.
\end{enumerate}
If $M\otimes_{R}N$ is torsion-free, then $\Tor_i^R(M,N)=0$ for all $i\geq 1$, and $M$ and $N$ are torsion-free.
\end{thm}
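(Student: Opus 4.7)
The plan is to combine the hypersurface presentation $R=S/(f)$, Eisenbud's matrix factorization theorem, and the vanishing of the theta invariant to reduce the claim to the statement that $\Tor^R_i(M,N)=0$ for all $i\ge 1$; once that is established, the torsion-freeness of $M$ and $N$ follows from the Auslander depth formula.

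Assume without loss of generality that $\pd_S(M)<\infty$. Since $f\in\fn$ is a non-zero-divisor on $S$ annihilating $M$, Eisenbud's theorem yields a matrix factorization of $f$ realizing the minimal $R$-free resolution of $M$ in high degrees; consequently $\Tor^R_i(M,N)\cong\Tor^R_{i+2}(M,N)$ for all $i\gg 0$. By hypothesis (ii) these modules stabilize to finite-length modules $T_0,T_1$ in even and odd degrees, and hypothesis (iii) asserts $\len(T_0)=\len(T_1)$. A theorem of Dao on theta pairings over hypersurface rings, in which the finite-length hypothesis (ii) substitutes for an isolated singularity assumption, then forces $T_0=T_1=0$, that is, $\Tor^R_i(M,N)=0$ for all $i\gg 0$.

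To descend from ``$i\gg 0$'' to ``$i\ge 1$'', we combine Tor rigidity over hypersurface rings (Lichtenbaum, Huneke--Wiegand), available because $\pd_S(M)<\infty$ gives $M$ finite complete intersection dimension, with the torsion-freeness of $M\otimes_R N$. Tensoring a minimal presentation $0\to\Omega^1_R M\to R^{b_0}\to M\to 0$ of $M$ with $N$ yields the exact sequence
\[
0\to\Tor^R_1(M,N)\to\Omega^1_R M\otimes_R N\to N^{b_0}\to M\otimes_R N\to 0,
\]
and a syzygy-by-syzygy inductive argument---noting that the hypotheses (i)--(iii) together with the torsion-freeness of the tensor product are inherited by $(\Omega^1_R M,N)$ in view of the already-established high-index vanishing---forces $\Tor^R_1(M,N)=0$; rigidity then upgrades this to $\Tor^R_i(M,N)=0$ for all $i\ge 1$.

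Finally, the Auslander depth formula gives
\[
\depth_R M+\depth_R N=\depth R+\depth_R(M\otimes_R N)=1+\depth_R(M\otimes_R N)\ge 2,
\]
since torsion-freeness of $M\otimes_R N$ in the one-dimensional Cohen--Macaulay ring $R$ means $\depth_R(M\otimes_R N)\ge 1$. As $\depth_R M,\depth_R N\le\dim R=1$, both depths must equal $1$, so $M$ and $N$ are torsion-free. The main obstacle is the descent step: rigidity of Tor naturally propagates vanishing upward, so bringing the vanishing all the way down to $\Tor^R_1(M,N)$ requires a careful joint exploitation of the torsion-freeness of the tensor product, the $2$-periodicity of $\Tor^R$ from the matrix factorization, and the inheritance of the theta and finite-length data along the syzygy chain.
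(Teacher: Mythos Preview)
Your argument has a genuine gap at the second step, and a related gap in the ``descent'' step.

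\textbf{The main gap.} You assert that ``a theorem of Dao on theta pairings over hypersurface rings \ldots\ then forces $T_0=T_1=0$, that is, $\Tor^R_i(M,N)=0$ for all $i\gg 0$.'' No such theorem exists. The vanishing $\theta^R(M,N)=0$ says \emph{exactly} that $\len(T_0)=\len(T_1)$; it does not say either length is zero. Dao's result \cite[2.8]{Da1} is that, when $S$ is an \emph{unramified regular} local ring, $\theta^R(M,N)=0$ makes the pair $(M,N)$ Tor-rigid; it does not produce any vanishing of $\Tor$ on its own. (For a concrete illustration that $\theta=0$ is compatible with $\Tor\ne 0$, see Example~\ref{Omoshiroi}.) Since the rest of your descent step is predicated on ``the already-established high-index vanishing,'' the argument collapses here.

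\textbf{The secondary gap.} Even granting high-index vanishing, the rigidity you invoke---Lichtenbaum, Huneke--Wiegand, Dao---requires $S$ to be regular (unramified, in Dao's case). In the present theorem $S$ is an arbitrary local ring, and the paper explicitly points out (see the paragraph following \ref{thm:eta}) that these rigidity results are \emph{not} available under this setup. Finite complete intersection dimension does not by itself yield Tor-rigidity. The same caveat applies to your appeal to ``Eisenbud's matrix factorization theorem'': the eventual $2$-periodicity of $\Tor^R_i(M,N)$ is correct, but it comes from the change-of-rings long exact sequence for $S\twoheadrightarrow R$ together with $\pd_S(M)<\infty$ (see \ref{CRS} and (\ref{Dao}.1)), not from matrix factorizations, which require $S$ regular.

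\textbf{What the paper actually does.} The paper does not pass through high-index vanishing followed by rigidity. Instead it (a) splits off the torsion part $V=\tp_R M$ and shows $\theta^R(V,N)=0$ by an Euler-characteristic argument, so that $\theta^R(U,N)=0$ for $U=\tf_R M$; (b) replaces $N$ by a maximal Cohen--Macaulay approximation $Z$ via \ref{kullan}, so that $\pd_S(Z)=1$ and $U\otimes_R Z$ is torsion-free; and (c) reads the change-of-rings exact sequence for the pair $(U,Z)$ to see directly that $\Tor_2^R(U,Z)=0$, since $\Tor_2^S(U,Z)=0$ and a finite-length submodule of the torsion-free module $U\otimes_R Z$ must vanish. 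Only then does $2$-periodicity combined with $\theta^R(U,Z)=0$ force $\Tor_i^R(U,Z)=0$ for all $i\ge 1$. The torsion-freeness hypothesis is thus used \emph{before} any periodicity or $\theta$-argument, not after.
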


The tool we employ to prove Theorem \ref{MainIntro} is the Hochster's $\theta$ invariant, which was initially defined by Hochster \cite{Ho1} to study the direct summand conjecture; it was further developed by Dao \cite{Da3, Da1}, and more recently by Buchweitz and Van Straten \cite{BIT}, and Walker et al. \cite{Mark3, Markarkiv}; see \ref{Dao} and \ref{Dao2}. To our best knowledge, Theorem \ref{MainIntro} is new, even if $S$ is a ramified regular ring; see \cite[3.6]{GORS2} and section 3.

Next is a corollary of Theorem \ref{MainIntro} concerning Conjecture \ref{HWC}; see Corollaries  \ref{extend}, \ref{corMF}.% and the paragraph preceeding it.

\begin{cor} \label{sonucbir} Let $R$ be a one-dimensional Cohen-Macaulay ring with $R=S/(f)$ for some local ring $(S, \mathfrak{n})$ and a non zero-divisor $f \in \mathfrak{n}^2$ on $S$. Assume $M \in \md R$ is a module that has rank. If $M$ is not free, torsion-free, and $\pd_{S}(M)<\infty$, then $M\otimes_RM^{\ast}$ has torsion. In particular, if  $M=\coker(\alpha)$, where $(\alpha, \beta)$ is a reduced matrix factorization of $f$ over $S$ (i.e., a matrix factorization of $f$ with entries in $ \mathfrak{n}$), then $M\otimes_{R}M^{\ast}$ has torsion.
\end{cor}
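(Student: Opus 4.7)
The plan is to derive the corollary from Theorem~\ref{MainIntro} applied with $N=M^{\ast}$, and then to upgrade the resulting Tor-vanishing into the freeness of $M$ by a rigidity argument. Arguing by contradiction, suppose $M\otimes_R M^{\ast}$ is torsion-free; the goal is to show $M$ must then be free, contradicting the hypothesis.

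I verify the three hypotheses of Theorem~\ref{MainIntro} for the pair $(M,M^{\ast})$. Hypothesis~(i) is immediate from $\pd_S(M)<\infty$. For hypothesis~(ii), torsion-freeness with rank $r$ gives $M_\fp\cong R_\fp^{\oplus r}$ at every $\fp\in\Ass(R)$; dualizing, $(M^{\ast})_\fp$ is also free of rank $r$, so $\Tor_i^R(M,M^{\ast})_\fp=0$ for all $i\geq 1$ and all $\fp\in\Ass(R)$. Because $R$ is one-dimensional Cohen-Macaulay, $\Ass(R)$ is precisely the set of non-maximal primes, so each $\Tor_i^R(M,M^{\ast})$ is $\fm$-torsion and hence of finite length. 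The decisive ingredient is hypothesis~(iii), the vanishing $\theta^R(M,M^{\ast})=0$; it should follow from $\pd_S(M)<\infty$ together with $f\in\fn^2$ by invoking the vanishing theorems for the theta invariant on hypersurfaces due to Dao~\cite{Da1,Da3} and refined by Moore--Piepmeyer--Spiroff--Walker and Walker et al.\ (as cited in the introduction), which ensure that $\theta^R(X,Y)=0$ whenever one of $X,Y$ has finite projective dimension over a lifting $S$ with $f\in\fn^2$. I expect pinning down the precise form of this vanishing theorem, and checking that it applies in the ambient generality (with $S$ not assumed regular), to be the main obstacle.

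Theorem~\ref{MainIntro} then yields $\Tor_i^R(M,M^{\ast})=0$ for every $i\geq 1$. Combined with the finite complete intersection dimension of $M$ afforded by $\pd_S(M)<\infty$ (in particular, bounded Betti numbers over $R$), a standard rigidity argument of the type developed in~\cite{CeRo} forces $\pd_R(M)<\infty$. Since $M$ is torsion-free over the one-dimensional Cohen-Macaulay local ring $R$, it is maximal Cohen-Macaulay, so $\depth_R M=1=\depth R$, and the Auslander-Buchsbaum formula gives $\pd_R(M)=0$. Hence $M$ is free, the desired contradiction.

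For the ``in particular'' statement, a reduced matrix factorization $(\alpha,\beta)$ of $f$ over $S$ yields a short exact sequence $0\to S^n\xrightarrow{\alpha}S^n\to M\to 0$ (injectivity of $\alpha$ follows from $\beta\alpha=f\cdot I$ together with $f$ being a non-zero-divisor on $S$), so $\pd_S(M)\leq 1$. By standard matrix factorization theory, $M$ is a non-free maximal Cohen-Macaulay $R$-module with rank --- non-free because a free summand of $M$ would force a trivial block in $(\alpha,\beta)$, contradicting the requirement that both matrices have entries in $\fn$. The first part of the corollary now applies and delivers the torsion in $M\otimes_R M^{\ast}$.
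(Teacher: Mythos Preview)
Your plan follows the paper's strategy in outline (apply Theorem~\ref{MainIntro} with $N=M^{\ast}$, then deduce freeness), but the verification of hypothesis~(iii) is a genuine gap. The theorems of Dao, Moore--Piepmeyer--Spiroff--Walker, and Walker concern isolated hypersurface singularities with \emph{regular} ambient ring $S$ and, crucially, require $\dim R$ to be even (or impose comparable topological constraints). None of them asserts that $\theta^R(X,Y)=0$ whenever one of $X,Y$ has finite projective dimension over some lifting $S$ with $f\in\fn^2$; no such statement exists, and indeed over $R=k[\![x,y]\!]/(xy)$ one has $\pd_S(R/(x))=1$ while $\theta^R(R/(x),R/(y))\neq 0$. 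The hypothesis $f\in\fn^2$ plays no role in the vanishing of $\theta$ here; it is present only so that reduced matrix factorizations produce \emph{non-free} modules.

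The paper obtains $\theta^R(M,M^{\ast})=0$ from the rank hypothesis, and this is both easier and more robust: since $M$ has rank $r$, one has $M_\fp\cong R_\fp^{\oplus r}$ at every minimal prime, so $[M]=r[R]$ in $\G(R)_{\QQ}$, i.e.\ $[M]=0$ in $\overline{\G}(R)_{\QQ}$ (see Remark~\ref{rmkyeni} and \ref{Giro}). Because $M^{\ast}$ is locally free on $\Ass(R)$, the function $\theta^R(-,M^{\ast})$ is defined on all of $\md R$, is additive on short exact sequences, and kills $[R]$; hence it factors through $\overline{\G}(R)_{\QQ}$ and vanishes on $[M]$. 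This is the content of Corollary~\ref{coryeni} and Remark~\ref{rmkyeni}, and it needs only $f\in\fn$.

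Two smaller points. Your step~7 is too vague: the vanishing of $\Tor_i^R(M,M^{\ast})$ for $i\ge 1$ does not by itself force $\pd_R(M)<\infty$. The paper's argument (Lemma~\ref{lemiyi}(b)) instead passes to the transpose: since $M^{\ast}$ is stably $\Omega^2\Tr M$, Tor-vanishing with $M^{\ast}$ gives $\Tor_i^R(M,\Tr M)=0$ for $i\gg 0$; the depth formula yields $\CI_R(M)=0$, and then \ref{AGProp}(vii) pushes the vanishing down to $\Tor_1^R(M,\Tr M)=0$, which forces $M$ to be free. Finally, in the matrix factorization paragraph you assert that $M=\coker(\alpha)$ automatically has rank; this is false (e.g.\ $R=k[\![x,y]\!]/(xy)$ and the $1\times 1$ factorization $(x,y)$), and the paper retains the rank assumption in Corollary~\ref{corMF}.
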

%or $\pd_S(M^{\ast})<\infty$

As mentioned previously, if $S$ is regular, Corollary \ref{sonucbir} follows from a result of Huneke and Wiegand \cite[3.7]{HW1}. In this case, as well-known, maximal Cohen-Macaulay $R$-modules with no free summands occur as reduced matrix factorizations of $f$ over $S$; see \cite{Ei}. Similarly, if $S$ is \emph{G-regular} (i.e., when there are no non-free totally reflexive $S$-modules), Takahashi \cite{TakG} proved that there is a one-to-one correspondence between reduced matrix factorizations of $f$ and totally reflexive $R$-modules without free summands. Note that, if the ring $R$ is as in Corollary \ref{sonucbir}, reduced matrix factorizations of $f$ exist due to a result of Herzog, Ulrich and Backelin; see \cite[1.2 and 2.2]{HUB}, and also \cite[5.1.3]{Av2}, \cite[3.1]{AGP}, \cite[Chapter 8]{Yo}. 

In sections 2 and 3,  we collect some preliminary results and give a proof of Theorem \ref{MainIntro}, respectively. Section 4 is devoted to several applications of Theorem \ref{MainIntro} pertaining to torsion properties of tensor products of modules. As the gist of Theorem \ref{MainIntro} relies upon the vanishing of theta invariant, in section 5 we point out by an example that $\theta^R(M,N)$ can vanish non-trivially: in Example \ref{Omoshiroi}, we record an example of a one-dimensional reduced hypersurface ring $R$, and modules $M,N \in \md R$ such that $\theta^R(M,N)=0$, but neither $M$ nor $N$ has rank, or equivalently, neither $M$ nor $N$ has zero class in the reduced Grothendieck group $\overline{\G}(R)_{\QQ}$. Moreover, in section 6, building on an argument of Huneke and Wiegand \cite[4.7]{HW1}, we recall how to obtain examples of non-free, torsion-free modules $M$ with rank such that $M\otimes_RM$ is torsion-free over certain one-dimensional rings $R$; see \ref{Appendix2main}.
%Moreover, as the gist of Theorem \ref{MainIntro} relies upon the vanishing of theta invariant, in section 5 we point out by an example that $\theta^R(M,N)$ can vanish non-trivially: we record an example of a one-dimensional hypersurface ring $R$, and modules $M,N \in \md R$ such that $\theta^R(M,N)=0$, but neither $M$ nor $N$ has rank, or equivalently, neither $M$ nor $N$ has zero class in the reduced Grothendieck group $\overline{\G}(R)_{\QQ}$; see Remark \ref{rk}, \ref{Giro} and  Example \ref{Omoshiroi}.

\section{Preliminaries}

In this section we collect some basic facts that will be used throughout the paper.

\begin{chunk} \textbf{Conventions.} Let $M\in \md R$ be a module. Then $\Omega M$ and $\Tr M$ denote the \emph{syzygy} and the \emph{transpose} of $M$, respectively.  For the definitions of standard homological invariants, such as the \emph{Gorenstein} dimension $\Gdim_R(M)$, \emph{complexity} $\cx_R(M)$ and the \emph{complete intersection} dimension $\CI_R(M)$ of $M$, we refer the reader to \cite{AuBr}, \cite{Av1} and  \cite{AGP}. %We set $\depth(0)=\infty$ and $\pd(0)=-\infty$.
\end{chunk}

\begin{chunk} \textbf{Torsion submodule.} \label{tors} Let $R$ be a local ring and let $M\in \md R$ be a module. The \emph{torsion submodule} $\tp R M$ of $M$ is the kernel of the natural map $M\to \text{Q}(R)\otimes_RM$, where $\text{Q}(R)$ is the total quotient ring of $R$. Hence there is an exact sequence of the form:
\begin{equation} \tag{\ref{tors}.1}
0\lra \tp RM \lra M\lra \tf RM\lra 0.
\end{equation}
$M$ is said to have \emph{torsion} (respectively,  \emph{torsion-free}) if $\tp RM\neq 0$ (respectively, $\tp RM= 0$). Note that, $M$ is torsion, i.e., $\tp RM=M$, if and only if $M_\fp = 0$ for each $\fp \in \Ass(R)$. 
\end{chunk}

\begin{chunk} \label{Yosh} Let $R$ be a local ring and let $L, M \in \md R$ be modules. If $M$ is maximal Cohen-Macaulay and $\pd_R(L)<\infty$, then $\Tor_{i}^R(L,M)=0$ for all $i\geq 1$; see, for example, \cite[2.2]{Yoshida}.
\end{chunk}
%Then there is a regular sequence $x_1, \ldots ,x_h$ on $R$ such that, for any $R$-module $N$ for which $x_1, \ldots ,x_h$ form a regular sequence on $N$, we have 
%$\Tor_{i}^R(L, N)=0$ for all $i\geq 1$; see \cite[2.5]{Dutta}. 

%Therefore, if $R$ is Cohen-Macaulay and $M\in \md R$ is a module satisfying Serre's condition $(S_{h})$, i.e., $\depth_{R_{\fp}}(M_{\fp})\geq \min\{h, \dim(R_{\fp}) \}$ for all $\fp \in \Supp_R(M)$, then it follows that $\Tor_{i}^R(L,M)=0$ for all $i\geq 1$; see \cite[Proposition 6]{Sam} (Recall $\depth(0)=\infty$).
%Let $R$ be a local ring and let $M,N \in \md R$. If $\pd_R(M)<\infty$ and $N$ is maximal Cohen-Macaulay, then $\Tor_i^R(M,N)=0$ for all $i\geq 1$; see, for example, \cite[2.2]{Yoshida}.

\begin{chunk} \label{CRS} (\cite[11.65]{Rotman}) Let $(S, \fn)$ be a local ring and let $R=S/(f)$ for some non zero-divisor $f\in \fn$ on $S$. If $M,N \in \md R$ are modules, then there is an exact sequence of the form:
\begin{equation}\notag{}
\Tor_2^S(M,N) \to \Tor_2^R(M,N) \to M\otimes_{R}N  \to  \Tor_1^S(M,N) \to \Tor_1^R(M,N)  \to 0. 
\end{equation}
\end{chunk} 

%\begin{chunk} (\cite{AuBr}) \label{AuBrobs} Let $R$ be a local ring and let $M \in \md R$ be a torsion-free $R$-module. If $M_{\fp}$ is torsionless over $R_{\fp}$ for all $\fp \in \Ass(R)$, then $\Ext^1_R(\Tr M, R)=0$, i.e., $M$ is torsionless.% and $M$ embeds into a free $R$-module. 
%\end{chunk}

\begin{chunk} (\cite[3.1]{AM}) \label{kullan} Let $R$ be a local ring and let $N\in \md R$ such that $\Gdim_R(N)<\infty$. Then there is an exact sequence $0 \to L \to  Z \to N \to 0$,
where $\Gdim_R(Z)=0$ and $\pd_R(L)<\infty$. %, i.e., $Z$ is totally reflexive, 
\end{chunk}

Next we collect certain properties of complexity and complete intersection dimension.

\begin{chunk} \label{AGProp}
Let $R$ be a local ring and let $0\neq M\in \md R$. Assume $\CI_R(M)<\infty$. Then,
\begin{enumerate}[\rm(i)]
\item $\cx_R(M) \leq \edim(R)-\depth(R)$; see \cite[5.6]{AGP}.
\item $\CI_{R_{\fp}}(M_{\fp})\leq \CI_R(M)$ for all $\fp\in \Spec(R)$; see \cite[1.6]{AGP} 
\item $\CI_R(M)=\sup\{i: \Ext_R^i(M,R)\neq 0\}=\depth(R)-\depth(M)$; see \cite[1.4]{AGP}
\item Let $f$ be a non zero-divisor on $R$. If $fM=0$, then $\CI_{R/fR}(M)<\infty$. Also, if $f$ is a non zero-divisor on $M$, then $\CI_{R/fR}(M/fM)=\CI_R(M)$; see \cite[1.12.2-3]{AGP}.
\item If $R \to R' $ is a flat local map of local rings and $\CI_{R'}(M\otimes_{R}R')<\infty$, then it follows $\CI_{R}(M)=\CI_{R'}(M\otimes_{R}R')$; see \cite[1.11]{AGP}. This fact does not require the finiteness of the complete intersection dimension $\CI_{R}(M)$ of $M$ over $R$. 
\item If $\CI_R(M)=0$, then it follows that $\CI(M^{\ast})=\CI_R(\Tr M)=0$, and also $\cx(M)=\cx(M^{\ast})<\infty$. Moreover, $\CI_R(M)=0$ if and only if $\CI_R(\Tr M)=0$; see \cite[3.5]{BJVH}, \cite[3.2]{BJcx} and \cite[3.2(i)]{Bounds}.
\item If $\Tor^{R}_{i}(M,N)=0$ for all $i\gg 0$, then $\Tor^R_{i}(M,N)=0$ for all $i\geq \CI_R(M)+1$; see \cite[4.9]{AvBu}. In particular, $\Tor_{i}^R(M,N)$ is torsion for all $i\gg 0$ if and only if $\Tor_{i}^R(M,N)$ is torsion for all $i\geq 1$; see \ref{tors}.
\item If $\Tor_{i}^R(M,N)=0$ for all $i\geq 1$, then it follows that the \emph{depth formula} for $M$ and $N$ holds, i.e., $\depth(M)+\depth(N)=\depth(R)+\depth(M\otimes_{R}N)$; see \cite[2.5]{ArY}.
\end{enumerate}
\end{chunk}

In the following we recall the definition of a version of Hochster's $\theta$ pairing \cite{Ho1}, developed by Dao in \cite{Da2}. This pairing can be defined in a more general setting, but the definition recorded here will suffice for our argument; see \cite{Da3, Da1} for more details.% In \ref{Dao2} we consider the case where the module in question has finite complete intersection dimension and complexity one; under this setting, we recall from \cite{} that one can consider quasi deformations that have zero dimensional closed fibers.

\begin{chunk} \textbf{$\theta$ pairing.} \label{Dao} (\cite{Da2} and \cite{Ho1}) 
Let $M, N\in \md R$. Assume $R \to R' \twoheadleftarrow S$ is a codimension one quasi-deformation with zero-dimensional closed fibre, i.e., we have a diagram of local ring maps such that $R \to R'$ is flat, $R'\cong S/(f)$ for some non zero-divisor $f$ on $R'$, and $\dim(R'/\fm R')=0$. We set $(-)'=-\otimes_{R}R'$ and assume the following conditions hold:
\begin{enumerate}[\rm(a)]
\item $\CI_S(N')<\infty$ and $\Tor_i^S(M',N')=0$ for all $i\gg 0$ (e.g., $\pd_S(N')<\infty$).
\item $\len(\Tor_i^R(M,N))<\infty$ for all $i\gg 0$ (e.g., $R$ is an isolated singularity).
\end{enumerate} 

It follows that $\CI_{R'}(N')<\infty$ and $\CI_R(N)=\CI_{R'}(N')$; see \ref{AGProp}(iv, v). Note we have, by (a) and \ref{AGProp}(vii), that $\Tor_i^S(M',N')=0$ for all $i>\CI_S(N')$. Therefore \cite[11.65]{Rotman} yields the following isomorphisms:
\begin{equation}\tag{\ref{Dao}.1}
\Tor_{i}^{R'}(M',N') \cong \Tor_{i+2}^{R'}(M',N') \text{ for all } i>\CI_{R'}(N').
\end{equation}

For a non-maximal prime ideal $\fp$, we have $\Tor_i^R(M,N)_{\fp}=0$ for all $i>\CI_R(N)$; see \ref{AGProp}(ii, vii). Thus $\len(\Tor_i^R(M,N))<\infty$ for all $i>\CI_R(N)$, and hence
\begin{equation}\tag{\ref{Dao}.2}
\len_{R'}(\Tor_i^{R'}(M',N'))<\infty \text{ for all } i>\CI_{R'}(N').
\end{equation}
Let $\ell=\len_{R'}(R'/\fm R')$. Then, by (\ref{Dao}.1) and (\ref{Dao}.2), we see that the difference
\begin{align}\notag{}
& \len(\Tor_{2n+2}^{R}(M,N))-\len(\Tor_{2n+1}^{R}(M,N))=& \\& \frac{1}{\ell} \cdot \left( \len_{R'}(\Tor_{2n+2}^{R'}(M',N'))-\len_{R'}(\Tor_{2n+1}^{R'}(M',N')) \right) \notag{}
\end{align}
is independent of $n$ if $2n>\CI_{R'}(N')-1$. One defines the theta pairing over $R$ as:
\begin{equation}\notag{}%\ref{Dao}.1}
\theta^R(M,N)=\len(\Tor_{2n+2}^{R}(M,N))-\len(\Tor_{2n+1}^{R}(M,N)),
\end{equation}
where $n$ is an integer with $2n>\CI_R(N)-1$.

It follows from the definition that $\theta^R$ is additive on short exact sequence of modules in $\md R$, whenever it is well-defined on each pair of modules in question.

%Assume now $\CI_S(N')=1$, i.e., $\CI_{R'}(N')=0$. Then we have by (\ref{Dao}.1) and (\ref{Dao}.2) that $\Tor_{i}^{R'}(M',N') \cong \Tor_{i+2}^{R'}(M',N')$ and $\len(\Tor_i^{R'}(M',N'))<\infty$, for all $i\geq 1$.
%Set $a=\len_{R'}(\Tor_{2n+2}^{R'}(M',N'))$ and $b=\len_{R'}(\Tor_{2n+1}^{R'}(M',N'))$, for each $n\geq 0$. Then it follows $\displaystyle{\theta^R(M,N)=\frac{1}{\ell}(a-b)}$. In particular, if $\theta^R(M,N)=0$ and $\Tor_s^R(M,N)=0$ for some $s\geq 0$, then $\Tor_j^R(M,N)=0$ for all $j\geq 1$.
\end{chunk}

\begin{chunk} \label{Dao2} 
Let $M, N\in \md R$. Assume the following conditions hold:
\begin{enumerate}[\rm(a)]
\item $\CI_R(N)<\infty$ and $\cx_{R}(N)=1$. 
\item $\len(\Tor_i^R(M,N))<\infty$ for all $i\gg 0$
\end{enumerate} 
Then we can choose a codimension one quasi-deformation of the form $R \to R' \stackrel{\alpha}{\twoheadleftarrow} S$, where $\pd_{S}(N')<\infty$; see \cite[4.1.3]{AvBu}. Localizing at some $\fp \in \Min_{R'}(R'/ \fm R')$ with $\fq=\alpha^{-1}(\fp)$, we see that $R \to R'_{\fp} \stackrel{\alpha}{\twoheadleftarrow} S_{\mathfrak{\fq}}$ is a codimension one quasi-deformation with $\pd_{S_{\fq}}(N\otimes_{R}R'_{\fp})<\infty$; see the proof of \cite[2.11]{Sean}. Therefore, replacing the original quasi-deformation with the aforementioned one, we may assume $\dim(R'/\fm R')=0$. 

So it follows from \ref{Dao} that $\theta^R(M,N)=\len(\Tor_{2n+2}^{R}(M,N))-\len(\Tor_{2n+1}^{R}(M,N))$  is well-defined, as long as $n$ is an integer with $2n>\CI_R(N)-1$.%; see also \cite[6.9]{Da2}.
\end{chunk}

%\begin{chunk} \label{Dao3}
%Let $M, N\in \md R$. Assume the following conditions hold:
%\begin{enumerate}[\rm(a)]
%\item $\CI_R(N)=0$ and $\cx_{R}(N)=1$.
%\item $\len(\Tor_i^R(M,N))<\infty$ for all $i\gg 0$. 
%\end{enumerate} 

%In that case we can choose a codimension one quasi-deformation $R \to R' \twoheadleftarrow S$ such that $\dim(R'/\fm R')=0$ and $\pd_S(N')<\infty$; see \ref{Dao2}. Then $\CI_{R'}(N')=0$ and so, by (\ref{Dao}.1) and (\ref{Dao}.2), we have $\Tor_{i}^{R'}(M',N') \cong \Tor_{i+2}^{R'}(M',N')$ and $\len(\Tor_i^{R'}(M',N'))<\infty$, for all $i\geq 1$. Thus it follows from \ref{Dao} that $\theta^R(M,N)=\frac{1}{\ell}(a-b)$, where $\ell=\len_{R'}(R'/\fm R')$, $a=\len_{R'}(\Tor_{2i+2}^{R'}(M',N'))$ and $b=\len_{R'}(\Tor_{2i+1}^{R'}(M',N'))$ for any $i\geq 0$. 

%In particular, if $\theta^R(M,N)=0$, then the pair $(M,N)$ is Tor-rigid, i.e, if $\Tor_s^R(M,N)=0$ for some $s\geq 0$, then $\Tor_j^R(M,N)=0$ for all $j\geq 1$.
%\end{chunk}

\section{Proof of the main result}

In this section we prove the main result of this paper; see Theorem \ref{Main}. Our motivation comes from the following result, which is recorded for the one-dimensional case:

\begin{chunk} (Celikbas, Piepmeyer, Iyengar and Wiegand; see \cite[3.6]{GORS2}) \label{thm:eta} 
Let $R$ be a one-dimensional local ring with $\widehat{R}=S/(f)$ for some unramified regular local ring $S$, and a non zero-divisor $f\in \fn$ on $S$. Let $M, N \in \md R$ be nonzero modules. Assume $\len(\Tor_i^R(M,N))<\infty$ for all $i\gg 0$, and $\theta^R(M,N)=0$. If $M\otimes_{R}N$ is torsion-free, then $\Tor^{R}_{i}(M,N)=0$ for all $i\ge 1$, and $M$ and $N$ are both torsion-free. \pushQED{\qed} 
\qedhere 
\popQED	
\end{chunk}

A consequence of our argument gives an extension of \ref{thm:eta} and establishes the vanishing of $\Tor^{R}_{i}(M,N)$ when $S$ is an arbitrary two-dimensional Cohen-Macaulay local ring, and $\pd_S(N)<\infty$ or $\pd_S(M)<\infty$; see Theorem \ref{Main}. As is clear, since we do not work over hypersurface rings, our method of proof is different from that employed to prove \ref{thm:eta}. Among other things, one of the properties that is not available to us under our setup is that, when $R$ is a hypersurface, every torsion-free module can be embedded in a free $R$-module; see \cite[1.5]{HW1}. Also, over a ring $R$ as in \ref{thm:eta}, for a pair of modules $(M,N)$ in $\md R$, if $\theta^R(M,N)$ is defined and vanishes, then the pair $(M,N)$ is Tor-rigid \cite[2.8]{Da1}; this Tor-rigidity result depends on the fact that $S$ is an unramified regular ring. Thus the properties that play an important role in the proof of \ref{thm:eta} do not apply directly under our setup.

The following is our main result; although we are mainly interested in the one dimensional case
(due to Conjecture \ref{HWC}), our argument works over Cohen-Macaulay local rings of arbitrary positive dimension as long as modules considered have sufficiently large depth.

\begin{thm} \label{Main} Let $R$ be a Cohen-Macaulay local ring, and let $M, N \in \md R$ be modules. Assume $ \dim(R)=d\geq 1$, and the following conditions hold:
\begin{enumerate} [\rm(i)]
\item $\CI_R(N)<\infty$ and $\cx_{R}(N)\leq 1$.
\item $\len(\Tor_i^R(M,N))<\infty$ for all $i\gg 0$.
\item $\depth(M)\geq d-1$ and $\depth(N)\geq d-1$.
\item If $d=1$, assume further $\theta^R(M,N)=0$.
\end{enumerate}
If $M\otimes_{R}N$ is (nonzero) maximal Cohen-Macaulay, then $\Tor_i^R(M,N)=0$ for all $i\geq 1$, and $M$ and $N$ are both maximal Cohen-Macaulay.
\end{thm}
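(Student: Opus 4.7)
The proof will be by induction on $d$, with the base case $d=1$ the heart of the argument. The inductive step handles $d\ge 2$ by cutting down by a regular element and invoking the result in dimension $d-1$.

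\textbf{Base case $d=1$.} Since $\CI_R(N)<\infty$ and $\cx_R(N)\le 1$, \ref{Dao2} provides a codimension-one quasi-deformation $R\to R'\twoheadleftarrow S$ with $\dim(R'/\fm R')=0$ and $\pd_S(N')<\infty$. Faithful flatness of $R\to R'$ transfers all of the hypotheses to $(R',M',N')$ and descends the conclusion, so one may assume $R=S/(f)$ for a local ring $(S,\fn)$ and an $S$-regular $f$, with $\pd_S(N)<\infty$. The change-of-rings long exact sequence \ref{CRS}, together with $\Tor_i^S(M,N)=0$ for $i>\pd_S(N)$, yields the 2-periodicity
\[
\Tor_i^R(M,N)\cong \Tor_{i+2}^R(M,N)\quad\text{for all } i\ge \pd_S(N).
\]
In particular the Tors of sufficiently high index have finite length, and $\theta^R(M,N)=0$ forces them to stabilize to a common length $\lambda$ on both parities. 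The crucial step is to show $\lambda=0$: the plan is to exploit the torsion-freeness of $M\otimes_R N$ against the low-degree portion of \ref{CRS} and the rigidity result \ref{AGProp}(vii) to rule out any nonzero common length. Once $\lambda=0$, \ref{AGProp}(vii) extends vanishing to all $i\ge 1$, and the depth formula \ref{AGProp}(viii) then makes $M$ and $N$ MCM.

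\textbf{Inductive step $d\ge 2$.} By prime avoidance, choose $x\in\fm$ regular on each of $R$, $M$, $N$, and $M\otimes_R N$ (possible since all have depth $\ge d-1\ge 1$). Set $\bar R=R/(x)$, $\bar M=M/xM$, $\bar N=N/xN$. Then $\bar M\otimes_{\bar R}\bar N=(M\otimes_R N)/x(M\otimes_R N)$ is MCM over the $(d-1)$-dimensional CM ring $\bar R$; \ref{AGProp}(iv) together with standard preservation of Betti numbers under modding out by a regular element gives $\CI_{\bar R}(\bar N)<\infty$ and $\cx_{\bar R}(\bar N)\le 1$; depths drop by one; and Rees' isomorphism $\Tor_i^{\bar R}(\bar M,\bar N)\cong \Tor_i^R(M,\bar N)$ together with the long exact sequence associated to $0\to N\xrightarrow{x} N\to \bar N\to 0$ transfers the finite-length condition on Tor. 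The delicate point is verifying $\theta^{\bar R}(\bar M,\bar N)=0$ when $d=2$: writing $T_i:=\Tor_i^R(M,N)$, the Euler-characteristic identity $\len(0:_x W)=\len(W/xW)$ for any finite-length $W$ combines with the above to give
\[
\theta^{\bar R}(\bar M,\bar N)=\len(T_{2n+2}/xT_{2n+2})-\len(T_{2n}/xT_{2n})\quad(n\gg 0),
\]
and this vanishes because the 2-periodicity $\Tor_{i+2}^{R'}(M',N')\cong\Tor_i^{R'}(M',N')$ obtained from the quasi-deformation via \ref{Dao2} descends under faithful flatness of $R\to R'$ to the equality $\len_R(T_{i+2}/xT_{i+2})=\len_R(T_i/xT_i)$ for $i\gg 0$. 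Applying induction yields $\Tor_i^{\bar R}(\bar M,\bar N)=0$ for all $i\ge 1$, hence $\Tor_i^R(M,\bar N)=0$, and then the long exact sequence shows that multiplication by $x\in\fm$ is surjective on the finitely generated $R$-module $\Tor_i^R(M,N)$. Nakayama's lemma then gives $\Tor_i^R(M,N)=0$ for all $i\ge 1$, and a final application of \ref{AGProp}(viii) gives that $M$ and $N$ are MCM.

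\textbf{Main obstacle.} The hardest step is the identification $\lambda=0$ in the base case. In the hypersurface setting of \cite[3.6]{GORS2} one has $S$ regular and invokes standard Tor-rigidity directly; here only $\pd_S(N)<\infty$ is available, so the argument must extract finer information from the torsion-freeness of $M\otimes_R N$ via the change-of-rings sequence \ref{CRS}, which is where the bulk of the technical work lies.
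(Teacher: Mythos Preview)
Your overall architecture (induction on $d$, reduction to $d=1$ via a quasi-deformation $R=S/(f)$ with $\pd_S(N)<\infty$, then $2$-periodicity plus $\theta=0$ to reduce to killing one Tor, then the depth formula) matches the paper's. The inductive step is fine, and your verification that $\theta^{\bar R}(\bar M,\bar N)=0$ when $d=2$ is more detailed than what the paper writes.

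The genuine gap is exactly where you flag it: establishing $\lambda=0$ in the base case. Your plan---``use the low-degree portion of \ref{CRS} against the torsion-freeness of $M\otimes_R N$''---does not work as stated, for two reasons. First, \ref{CRS} gives $\Tor_2^S(M,N)\to\Tor_2^R(M,N)\to M\otimes_R N$, so you only get an injection $\Tor_2^R(M,N)\hookrightarrow M\otimes_R N$ when $\Tor_2^S(M,N)=0$; but you have only $\pd_S(N)<\infty$, not $\pd_S(N)\le 1$, so this fails in general. Second, even if that map were injective, to conclude $\Tor_2^R$ vanishes you need the target to be torsion-free, and for the replacement of $N$ that the paper uses (see below) this requires $M$ itself to be torsion-free, which is not assumed.

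The paper fixes both issues with two moves you are missing. It first replaces $M$ by its torsion-free part $U=\tf RM$: additivity of $\theta$ on $0\to\tp RM\to M\to U\to 0$ reduces to showing $\theta^R(\tp RM,N)=0$, which follows from $\theta^R(k,N)=0$, proved via \ref{CRS} for $(\Omega_R N,k)$ and the vanishing of the Euler characteristic of $\Omega_R N$ over $S$. It then replaces $N$ by the maximal Cohen--Macaulay approximation $Z$ from \ref{kullan}, $0\to L\to Z\to N\to 0$ with $L$ free and $\CI_R(Z)=0$; now Auslander--Buchsbaum forces $\pd_S(Z)=1$, so $\Tor_2^S(U,Z)=0$, and tensoring the approximation sequence with $U$ shows $U\otimes_R Z$ is torsion-free (here $U$ torsion-free is essential). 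Then \ref{CRS} gives $\Tor_2^R(U,Z)\hookrightarrow U\otimes_R Z$, hence $\Tor_2^R(U,Z)=0$; combined with $\theta^R(U,Z)=\theta^R(U,N)=0$ and the $2$-periodicity, all $\Tor_i^R(U,Z)$ vanish, hence so do $\Tor_i^R(U,N)$ and $\Tor_i^R(M,N)$. Without these two replacements your argument does not close.
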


\begin{proof} It suffices to prove the vanishing of $\Tor_i^R(M,N)$ for all $i\geq 1$; see \ref{AGProp}(viii). 

We first assume $d\geq 2$, and choose a non zero-divisor $x$ on $R$, $M$, $N$ and $M\otimes_{R}N$. Setting $T=R/xR$, $A=M/xM$ and $B=N/xN$, we can see that $\theta^T(A,B)$ is defined and vanishes. Also, if $\Tor_i^T(A,B)=0$ for all $i\geq 1$, then we can show $\Tor_i^R(M,N)=0$ for all $i\geq 1$. Since $\depth_{T}(A)\geq \depth(T)-1$, $\depth_{T}(B)\geq \depth(T)-1$, $A\otimes_{T}B$ is maximal Cohen-Macaulay over $T$, $\CI_{T}(B)<\infty$ and $\cx_{T}(B)=\cx_{R}(N)$, it suffices to replace the pair $(M,N)$ over the ring $R$ with the pair $(A,B)$ over the ring $T$, and consider the theorem for the case where $d=1$.

We proceed by assuming $d=1$. If $\pd_R(N)<\infty$, then $\Tor^R_i(\tf RM, N)=0$ for all $i\geq 1$, which implies $\Tor_i^R(M,N)=0$ for all $i\geq 1$; see \ref{Yosh}. So  we may assume $\cx_{R}(N)=1$.
%It follows from \cite[1.1]{HW1} that $M\otimes_{R}\tf RN$ is torsion-free. Therefore, since $\pd(N)<\infty$ and $\tf RM$ is maximal Cohen-Macaulay, we get from \ref{Yosh} that $\Tor^R_i(\tf RM, N)=0$ for all $i\geq 1$. Consequently $\Tor_i^R(M,N)=0$ for all $i\geq 1$, and $M$ and $N$ are torsion-free by \ref{AGProp}(vi). Hence we may assume $\cx_{R}(N)=1$.
Now choose a quasi-deformation $R \to R' \twoheadleftarrow S$ such that $\dim(R'/\fm R')=0$, $R'=S/(f)$ for some local ring $(S,\fn)$ and a non zero-divisor $f\in \fn$ on $S$ with $\pd_{S}(N\otimes_{R}R')<\infty$; see \ref{Dao2}. Therefore we may assume $R=R'=S/(f)$ with $\pd_{S}(N)<\infty$.

Set $U=\tf RM$ and $V=\tp RM$. As $\dim(R)=1$, we know $\len(V)<\infty$. Thus $\theta^R(V,N)$ is well-defined; see (\ref{Dao2}). Also the short exact sequence $0 \to V \to M \to U \to 0$ implies 
$\len(\Tor_i^R(U,N))<\infty$ for all $i\gg 0$. In particular, $\theta^R(U,N)$ is well-defined and hence:
\begin{equation} \tag{\ref{Main}.4}
0=\theta^{R}(M, N)=\theta^{R}(U, N)+\theta^R(V, N).
\end{equation}

\noindent \emph{Claim}. $\theta^{R}(V, N)=0$.\\
\emph{Proof of the claim}. To prove the claim, we follow the argument of \cite[4.6]{CeD2}. Note that $\pd_S(\Omega_R N)=1$. It now follows from \ref{CRS} that there is an exact sequence:
\begin{equation} \tag{\ref{Main}.5}
0 \to \Tor_2^R(\Omega_R N,k) \to 
\Omega_R N\otimes_{S}k \to \Tor_1^S(\Omega_R N,k) \to  \Tor_1^R(\Omega_R N,k) \to 0.
\end{equation}
Taking the alternating sum of lengths of modules in (\ref{Main}.5), we obtain:
\begin{equation}  \notag{}
\theta^R(k, \Omega_R N)=\beta_2^R(\Omega_R N)-\beta_1^R(\Omega_R N)=\beta_0^S(\Omega_R N)-\beta_1^S(\Omega_R N),
\end{equation}
where $\beta(\Omega_R N)$ denotes the Betti number of $\Omega_R N$. The Euler characteristic of $\Omega_R N$ over $S$, which is $\beta_0^S(\Omega_R N)-\beta_1^S(\Omega_R N)$, vanishes since $f\cdot \Omega_R N=0$; see \cite[19.8]{Mat}. Therefore we have $\theta^{R}(k, \Omega_R N)=0$. As $\theta^{R}(k, N)=-\theta^{R}(k, \Omega_R N)$, we see $\theta^{R}(k, N)=0$. Moreover, since $V$ has a finite filtration by copies of $k$, it follows that $\theta^{R}(V, N)$ vanishes. This justifies the claim.
%\pushQED{\qed} 
%\qedhere
%\popQED
	
Now, by (\ref{Main}.4) and the claim, we have $\theta^{R}(U, N)=0$. We proceed by considering the short exact sequence that follows from \ref{kullan}:
\begin{equation} \tag{\ref{Main}.6}
0 \to L \to  Z \to N \to 0,
\end{equation}
Here $L$ is free, $\CI_R(Z)=0$ and $\pd_S(Z)=1$. Tensoring (\ref{Main}.6) with $U$ over $R$, we see that $\Tor_i^R(U,Z) \cong \Tor_{i}^R(U, N)$ for each $i\geq 2$, and obtain the following exact sequence:
\begin{equation}\tag{\ref{Main}.7}
  0 \to \Tor_1^R(U,Z) \to \Tor_{1}^R(U,N) \to  U\otimes_{R}L \to U\otimes_{R}Z \to U\otimes_{R}N \to 0.
\end{equation}
Note that, for each $i\geq 1$, $\Tor_{i}^R(U,N)$ is torsion and $\len(\Tor_i^R(U,Z))<\infty$; see \ref{AGProp}(ii, vii). Hence, since $U\otimes_RL$ is torsion-free, it follows from (\ref{Main}.7) that $\Tor_i^R(U,Z) \cong \Tor_{i}^R(U,N)$ for each $i\geq 1$, and $U\otimes_RZ$ is torsion-free. Once again we consider the exact sequence from \ref{CRS}, this time for the pair $(U,Z)$:
\begin{equation}\notag{}
\Tor_2^S(U,Z) \to \Tor_2^R(U,Z) \to U\otimes_RZ  \to  \Tor_1^S(U,Z) \to \Tor_1^R(U,Z)  \to 0. 
\end{equation}
Since $\Tor_2^S(U,Z)=0$, $\depth_R(U\otimes_{R}Z)=1$ and $\len(\Tor_2^R(U,Z))<\infty$, we conclude that $\Tor_2^R(U,Z)$ vanishes. Moreover, we have:
\begin{equation}\notag{}
0=\theta^R(U,N)=\theta^R(U,Z)=\len( \Tor_{2n+2}^{R}(U,Z) )- \len( \Tor_{2n+1}^{R}(U,Z)) \text{ for } n\geq 0.
\end{equation}
As $\Tor_2^R(U,Z)=0$, we see that $\Tor_i^R(U,Z)=0$ for all $i\geq 1$. This implies the vanishing of $\Tor_i^R(U,N)$, as well as the vanishing of $\Tor_i^R(M,N)$, for each $i\geq 1$.
\end{proof}

We finish this section by noting a related result: in case $M$ is maximal Cohen-Macaulay, one can prove the following, which has no depth assumption on $N$. 

\begin{prop} \label{bitti} Let $R$ be a Cohen-Macaulay local ring of dimension $d\geq 1$ such that $R=S/(f)$ for some local ring $S$, and a non zero-divisor $f\in \fn$ on $S$. Let $M\in \md R$ be maximal Cohen-Macaulay, and let $N\in \md R$ be a module. Assume the following hold:
\begin{enumerate} [\rm(i)]
\item $\CI_S(N)<\infty$ and $\Tor_i^S(M,N)=0$ for all $i\gg 0$ (e.g., $\pd_S(N)<\infty$).
\item $\len(\Tor_i^R(M,N))<\infty$ for all $i\gg 0$. %(e.g., $R$ is an isolated singularity).
\item $\theta^R(M,N)=0$.
\end{enumerate}
If $M\otimes_{R}N$ is torsion-free, then $\Tor_i^R(M,N)=0$ for all $i\geq 1$.
\end{prop}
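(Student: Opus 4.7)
The plan is to mimic the proof of Theorem~\ref{Main} in the $d=1$ case, but leveraging the fact that $M$ is already maximal Cohen--Macaulay (so that the torsion-splitting of \ref{Main} is not needed) and handling the weaker hypothesis $\CI_S(N)<\infty$ (in place of $\pd_S(N)<\infty$) by passing to a sufficiently high syzygy. First I would check that $\theta^R(M,N)$ is well-defined: hypothesis (i) together with $fN=0$ gives $\CI_R(N)<\infty$ by \ref{AGProp}(iv), and the canonical codimension one quasi-deformation $R \to R \twoheadleftarrow S$ satisfies both conditions of \ref{Dao}. In particular, $\Tor_i^R(M,N)\cong\Tor_{i+2}^R(M,N)$ for $i>\CI_R(N)$, and by \ref{AGProp}(vii) each $\Tor_i^R(M,N)$ with $i\geq 1$ is torsion.

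Next I would apply \ref{kullan} to produce a short exact sequence $0 \to L \to Z \to N \to 0$ with $Z$ totally reflexive (so $\CI_R(Z)=0$) and $\pd_R(L)<\infty$, and tensor it with $M$. Since $M$ is MCM, \ref{Yosh} gives $\Tor_i^R(M,L)=0$ for all $i\geq 1$, which yields $\Tor_i^R(M,Z)\cong\Tor_i^R(M,N)$ for $i\geq 1$, the exact sequence
\begin{equation*}
0 \to M\otimes_R L \to M\otimes_R Z \to M\otimes_R N \to 0,
\end{equation*}
and $\theta^R(M,Z)=\theta^R(M,N)=0$ by additivity of the theta pairing. Since $\CI_R(Z)=0$, \ref{Dao} applies with $n=0$ to give the short formula $\theta^R(M,Z)=\len(\Tor_2^R(M,Z))-\len(\Tor_1^R(M,Z))$ together with $\Tor_i^R(M,Z)\cong\Tor_{i+2}^R(M,Z)$ for every $i\geq 1$. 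Consequently it suffices to force $\Tor_2^R(M,Z)=0$: then $\Tor_1^R(M,Z)=0$ by $\theta^R(M,Z)=0$, and 2-periodicity propagates the vanishing through all $i\geq 1$, delivering the desired conclusion for $(M,N)$.

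To prove $\Tor_2^R(M,Z)=0$ I would feed $(M,Z)$ into the change-of-rings sequence \ref{CRS} and argue, as in the proof of \ref{Main}, that $\Tor_2^R(M,Z)$ embeds in $M\otimes_R Z$ provided $\Tor_2^S(M,Z)=0$. Because $\Tor_2^R(M,Z)$ is torsion while its image in the torsion-free module $M\otimes_R N$ must vanish, the image lies inside $M\otimes_R L$; so if $M\otimes_R L$ is itself torsion-free, the map $\Tor_2^R(M,Z)\to M\otimes_R Z$ is zero and hence $\Tor_2^R(M,Z)=0$, as the target of the inclusion is torsion-free and of positive depth. Both missing ingredients become automatic when $L$ is free: the torsion-freeness of $M\otimes_R L\cong M^{\rho}$ then follows from $M$ being MCM, while $\depth_S(Z)=d=\depth(S)-1$ forces $\pd_S(Z)=1$ via Auslander--Buchsbaum (once $\pd_S(Z)<\infty$), hence $\Tor_2^S(M,Z)=0$. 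To secure freeness of $L$, I would replace $N$ by its $j$-th syzygy $N_j=\Omega_R^j N$ for $j\geq d$: then $N_j$ is MCM, the corresponding $L_j$ in its Gorenstein approximation is MCM of finite projective dimension and hence free by Auslander--Buchsbaum, and $\Tor_2^S(M,Z_j)=\Tor_2^S(M,N_j)=\Tor_{j+2}^S(M,N)$ vanishes for $j$ large by hypothesis~(i). The main obstacle I anticipate is transferring the conclusion back from $N_j$ to $N$ itself: the torsion-freeness hypothesis on $M\otimes_R N$ does \emph{not} pass to $M\otimes_R N_j$ (indeed the torsion of $M\otimes_R N_j$ is precisely $\Tor_j^R(M,N)$, the very group one is trying to kill), so the vanishing at high index must be ferried back down to $i=1$ by combining the established 2-periodicity with the constancy of $\len(\Tor_i^R(M,N))$ for $i\gg 0$ (both consequences of $\theta^R(M,N)=0$) and a careful back-induction through the 5-term sequence \ref{CRS} together with the torsion-freeness of $M\otimes_R N$ itself.
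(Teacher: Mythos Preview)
Your skeleton—apply \ref{kullan}, use \ref{Yosh} to pass from $N$ to $Z$, feed $(M,Z)$ into \ref{CRS}, then finish with $\theta=0$ plus $2$-periodicity—is exactly the route the paper intends. The gap is the detour through high syzygies $N_j=\Omega_R^jN$. As you yourself flag, torsion-freeness of $M\otimes_RN$ does not transfer to $M\otimes_RN_j$, and the ``careful back-induction'' you leave at the end is not an argument; there is no mechanism that pushes vanishing of $\Tor^R_i(M,N)$ from large $i$ back down to $i=1$ using only $\theta=0$ and eventual $2$-periodicity.

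The detour is unnecessary because both ``missing ingredients'' are already available for the original approximation $0\to L\to Z\to N\to 0$, with no freeness assumption on $L$:
\begin{itemize}
\item \emph{$M\otimes_RL$ is torsion-free.} The depth lemma applied to $0\to L\to Z\to N\to 0$ (with $Z$ MCM) gives $\depth_RL\geq 1$. Since $M$ is MCM and $\Tor_i^R(M,L)=0$ for $i\geq 1$ by \ref{Yosh}, the depth formula \ref{AGProp}(viii) yields $\depth_R(M\otimes_RL)=\depth_RL\geq 1$. Thus $M\otimes_RZ$, an extension of torsion-free modules, is torsion-free.
\item \emph{$\Tor_i^S(M,Z)=0$ for all $i\geq 2$.} You do not need $\pd_SZ<\infty$; it suffices that $\CI_S(Z)<\infty$. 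Pick a quasi-deformation $S\to S'\twoheadleftarrow Q$ with $\pd_Q(N\otimes_SS')<\infty$. Since $\pd_RL<\infty$ and $\pd_SR=1$, one has $\pd_SL<\infty$, hence $\pd_Q(L\otimes_SS')<\infty$ as well, and therefore $\pd_Q(Z\otimes_SS')<\infty$. So $\CI_S(Z)<\infty$, and as $Z$ is MCM over $R$ this forces $\CI_S(Z)=\depth(S)-\depth_S(Z)=1$. Now $\Tor_i^S(M,Z)=0$ for $i\gg 0$ (combine hypothesis (i) with $\pd_SL<\infty$), so \ref{AGProp}(vii) gives $\Tor_i^S(M,Z)=0$ for all $i\geq 2$.
\end{itemize}
With these two points the endgame of Theorem~\ref{Main} runs verbatim: $\CI_R(Z)=0$ by \ref{AGProp}(iv), so each $\Tor_i^R(M,Z)$ has finite length; $\Tor_2^R(M,Z)$ embeds in the torsion-free module $M\otimes_RZ$ and hence vanishes; then $\theta^R(M,Z)=0$ together with the $2$-periodicity (valid for all $i\geq 1$ by the second bullet) kills every $\Tor_i^R(M,Z)\cong\Tor_i^R(M,N)$.
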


We skip giving a proof of Proposition \ref{bitti} since it can be established by using the ideas employed in the proof of Theorem \ref{Main}.

\section{Some corollaries of the main result}

In this section we proceed to give various corollaries of Theorem \ref{Main} concerning the torsion submodule of tensor products of modules, especially those of the form $M\otimes_RM^{\ast}$ over one dimensional local rings. In particular we give a proof of Corollary \ref{sonucbir}; see Corollaries \ref{extend} and \ref{corMF}. Along the way we extend results of Huneke and Wiegand \cite{HW1}, and Auslander \cite{Au} on the reflexivity of tensor products of modules which justify Conjecture \ref{HWC} over normal domains; see Proposition \ref{Augen}.

We denote by $\G(R)$ the \textit{Grothendieck group} of modules in $\md R$, i.e., the quotient of the free abelian group of all isomorphism classes of modules in $\md R$ by the subgroup generated by the relations coming from short exact sequences of modules in $\md R$. We write $[M]$ for the class of $M$ in $\G(R)$ and denote by $\overline{\G}(R)$ the group $\G(R)/\ZZ \cdot [R]$, the reduced Grothendieck group of $R$. We set $\overline{\G}(R)_{\QQ}=(\G(R)/\ZZ \cdot [R])\otimes_{\ZZ}\QQ$. 

%We will make use of the next remark throughout.

%\begin{rmk} \label{rk} Let $R$ be a one-dimensional Cohen-Macaulay local ring and let $M\in \md R$.
%\begin{enumerate}[\rm(i)]
%\item Assume $M$ is torsion-free. Then $M$ has rank if and only if there a short exact sequence $0 \to M \to F \to C \to 0$ in $\md R$, where $F$ is free and $C$ has finite length; see \cite[1.3]{HW1}.
%\item Note $[M]=0$ if and only if $[\tf RM]=0$  in $\overline{\G}(R)_{\QQ}$; see (\ref{tors}.1) and \cite[2.5]{CeD}. So, if $M$ has rank, then $[M]=0$ by part (i).
%\end{enumerate}
%\end{rmk}

The next corollary corroborates \cite[1.2]{Ce}, which examines the vanishing of Tor for modules of complexity at most one over complete intersection rings.

\begin{cor} \label{coryeni} Let $R$ be a one-dimensional local ring and let $M, N\in \md R$ be nonzero modules. Assume the following hold:
\begin{enumerate} [\rm(i)]
\item $\CI_R(N)<\infty$ and $\cx_{R}(N)\leq 1$.
\item $N$ is locally free on $\Ass(R)$.
\item $[M]=0$ in $\overline{\G}(R)_{\QQ}$.
\end{enumerate}
If $M\otimes_{R}N$ is torsion-free, then $\Tor_i^R(M,N)=0$ for all $i\geq 1$, and $M$ and $N$ are torsion-free. Moreover, the pair $(M,N) $ is Tor-rigid.
\end{cor}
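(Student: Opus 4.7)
The plan is to verify the hypotheses of Theorem~\ref{Main} with $d=1$ and apply it. Hypotheses (i), (iii) of that theorem match (i) of the corollary and the trivial bound $\depth\geq 0$, respectively. For the finite-length condition (ii) of Theorem~\ref{Main}, note that in dimension one every non-maximal prime $\fp$ of $R$ lies in $\Min(R)\subseteq\Ass(R)$; hypothesis (ii) of the corollary then makes $N_{\fp}$ free for every $\fp\ne\fm$, so each $\Tor_i^R(M,N)$ is $\fm$-primary and of finite length, and $\theta^R(M,N)$ is defined. To verify hypothesis (iv), observe that the same reasoning shows $\theta^R(-,N)$ is defined on every module in $\md R$; the additivity recorded at the end of \ref{Dao}, together with $\theta^R(R,N)=0$, makes $X\mapsto\theta^R(X,N)$ descend to a group homomorphism $\overline{\G}(R)\to\ZZ$, which extends $\QQ$-linearly to $\overline{\G}(R)_{\QQ}\to\QQ$. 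Hypothesis (iii) of the corollary then forces $\theta^R(M,N)=0$. Since $M\otimes_R N$ is nonzero by Nakayama and torsion-free by assumption (hence maximal Cohen-Macaulay over the one-dimensional ring $R$), Theorem~\ref{Main} yields $\Tor_i^R(M,N)=0$ for all $i\geq 1$ together with the torsion-freeness of $M$ and $N$.

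For the Tor-rigidity assertion, suppose $\Tor_n^R(M,N)=0$ for some $n\geq 1$ and set $M'=\Omega^{n-1}M$. Iterating the defining syzygy sequences gives $[M']\equiv(-1)^{n-1}[M]\pmod{\ZZ\cdot[R]}$, so $[M']=0$ in $\overline{\G}(R)_{\QQ}$, hypotheses (i)--(iii) of the corollary still hold for the pair $(M',N)$, and the previous paragraph gives $\theta^R(M',N)=0$. Now $\cx_R(N)\leq 1$ forces, after the quasi-deformation to a hypersurface of \ref{Dao2}, the Tor modules $\Tor_i^R(M',N)$ to be eventually $2$-periodic, so $\len(\Tor_{2m+1}^R(M',N))$ and $\len(\Tor_{2m+2}^R(M',N))$ each stabilise for $m\gg 0$; the identity $\theta^R(M',N)=0$ forces these two stable values to coincide. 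The initial vanishing $\Tor_1^R(M',N)=\Tor_n^R(M,N)=0$ propagates through the periodic range to make this common value zero, so $\Tor_i^R(M',N)=0$ for $i\gg 0$. Invoking \ref{AGProp}(vii) for $N$ in place of $M$ (using $\CI_R(N)<\infty$ and the symmetry of Tor) then fills in the finitely many intermediate indices and delivers $\Tor_i^R(M,N)=0$ for all $i\geq n$.

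The step I expect to be most delicate is this last one. Without a torsion-freeness hypothesis on $M'\otimes_R N$ one cannot simply reapply Theorem~\ref{Main} to the pair $(M',N)$; the argument must instead exploit the interaction between the vanishing of $\theta^R(M',N)$ and the eventual two-periodicity forced by $\cx_R(N)\leq 1$ to upgrade a single Tor vanishing into vanishing from index $n$ on. Verifying cleanly that the stable even- and odd-indexed lengths agree and that the vanishing at index $1$ actually lies within (or can be translated into) the periodic range is the technical heart of the Tor-rigidity claim.
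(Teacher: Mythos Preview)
Your argument for the first conclusion matches the paper's, except for one omitted reduction: if $R$ is not Cohen--Macaulay then $\fm\in\Ass(R)$, so hypothesis~(ii) forces $N$ to be free and all claims are trivial. Only after this does ``nonzero torsion-free'' coincide with ``maximal Cohen--Macaulay'' so that Theorem~\ref{Main} applies.

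For Tor-rigidity your sketch is genuinely incomplete at exactly the point you flag. The periodicity of (\ref{Dao}.1) holds only for $i>\CI_R(N)$, and nothing in the hypotheses forces $\CI_R(N)=0$: when $\depth_R(N)=0$ one has $\CI_R(N)=1$, so the vanishing $\Tor_1^R(M',N)=0$ lies strictly \emph{outside} the periodic range. From periodicity together with $\theta^R(M',N)=0$ you learn only that $\len\Tor_i^R(M',N)$ is constant for $i\ge 2$; the single vanishing at $i=1$ gives no grip on that constant, and your parenthetical ``can be translated into the periodic range'' is the entire missing step, not a detail.

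The paper sidesteps this with a different device. Using the approximation of~\ref{kullan} it replaces $N$ by $Z$ in a short exact sequence $0\to F\to Z\to N\to 0$ with $F$ free, $\CI_R(Z)=0$, and $\cx_R(Z)=\cx_R(N)$; then $\Tor_i^R(M,N)\cong\Tor_i^R(M,Z)$ for every $i\ge 1$. From $\Tor_1^R(M,Z)=0$, tensoring the syzygy sequence of $M$ with $Z$ embeds $\Omega M\otimes_R Z$ into the torsion-free module $Z^{\oplus v}$, so $\Omega M\otimes_R Z$ is itself torsion-free and the \emph{first part of the corollary} applies directly to the pair $(\Omega M,Z)$. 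This gives $\Tor_i^R(\Omega M,Z)=0$ for all $i\ge 1$, hence $\Tor_i^R(M,N)=0$ for all $i\ge 1$. By arranging $\CI_R(Z)=0$ up front, the periodic-range obstruction never arises, and Theorem~\ref{Main} is reused rather than replaced by an ad hoc length-bookkeeping argument.
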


\begin{proof} We may assume $R$ is Cohen-Macaulay; otherwise $N$ would be free and the claims follow. Notice $\theta^R(N,-):\overline{G}(R)_{\QQ} \to \QQ$ is a well-defined function since $N$ is locally free on $\Ass(R)$; see \ref{Dao2}. As $[M]=0$ in $\overline{G}(R)_{\QQ}$, we have that $\theta^R(M,N)=0$ so the result follows from Theorem \ref{Main}.

%We may assume $R$ is Cohen-Macaulay. We may further assume that $\cx_R(N)=1$: modules of projective dimension at most one are Tor-rigid, see also \ref{Yosh}. 
To prove the claim on Tor-rigidity, we use the exact sequence that follows from \ref{kullan}:
\begin{equation} \tag{\ref{coryeni}.1}
0 \to F \to  Z \to N \to 0,
\end{equation}
Here $F$ is free, $\CI_R(Z)=0$ and $\cx_R(Z)=\cx_R(N)$. Now assume $\Tor_1^R(M,N)=0$. Applying $-\otimes_{R}Z$ to the syzygy exact sequence $0\to \Omega M \to R^{\oplus v} \to M \to 0$, we see $\Omega M \otimes_{R}Z$ is torsion-free. Hence it follows from the previous part that $\Tor_i^R(\Omega M,Z)=0$ for all $i\geq 1$. This implies $\Tor_i^R(M,N)=0$ for all $i\geq 1$. 
\end{proof}

\begin{rmk} \label{rmkyeni} If $R$ is a one-dimensional local ring and let $M\in \md R$ is a module (not necessarily torsion-free) which has rank, then it follows $[M]=0$ in $\overline{\G}(R)_{\QQ}$; see \cite[2.5]{CeD} and \cite[1.3]{HW1}. Therefore it follows from Corollary \ref{coryeni} that, if $R$ is a one-dimensional local ring and $M,N\in \md R$ are nonzero modules such that $\CI_R(N)<\infty$, $\cx_{R}(N)\leq 1$, $N$ is locally free on $\Ass(R)$, $M$ has rank and $M\otimes_{R}N$ is torsion-free, then $\Tor_{i}^R(M,N)=0$ for all $i\geq 1$, and $M$ and $N$ are torsion-free. \pushQED{\qed} 
\qedhere
\popQED	
\end{rmk}

\begin{rmk} \label{rmktorsion} \pushQED{\qed} 
Let $R$ be a one-dimensional domain and $M, N \in \md R$ be nonzero modules. 

If $M\otimes_{R}N$ is torsion-free, then $\Ext_R^1(\Tr M, N)=0$. Therefore, if $M\otimes_{R}N$ is torsion-free and $N$ is Tor-rigid, it follows that $\Ext_R^1(\Tr M, R)=0$, i.e., $M$ is torsion-free; see \cite{Au}. 

If $R$ is a complete intersection, it is an open problem whether $N$ must be Tor-rigid, and whether the torsion-freeness of $M\otimes_{R}N$ implies the torsion-freeness of $M$; see \cite[2.1]{CeRo}. Corollary \ref{coryeni} gives a partial affirmative answer and shows that both $M$ and $N$ are torsion-free in case $M\otimes_{R}N$ is torsion-free, and $M$ or $N$ has complexity at most one, i.e., has bounded Betti numbers. 
\qedhere
\popQED	
\end{rmk}
%This observation slightly extends \cite[4.14]{Ce}, which establishes the same result when both $M$ and $N$ have complexity one.Assume $\cx_R(N)\leq 1$, i.e., $N$ has bounded Betti numbers. Then $N$ is Tor-rigid, for example, by Proposition \ref{propAA}(i). In this case it is easy to see that both $M$ and $N$ are torsion-free. This remark extends \cite[4.14]{Ce}, and follows from Proposition \ref{propAA}(i) and Auslander's argument \cite[2.1]{CeRo}. 

It is known that the conclusion of Corollary \ref{coryeni} may fail in case $[M]\neq 0$ in $\overline{\G}(R)_{\QQ}$. For example, if $R=k[\![x,y]]\!/(xy)$, $M=R/(x)$ and $N=R/(x^2)$, then $M$ and $N$ are both locally free on $\Ass(R)$, but $\Tor_s^R(M,N)=0$ if and only if $s$ is either a negative integer or a positive even integer; see \cite[page 164]{HW2} and also \ref{Giro}. Indeed such a vanishing result occurs in general; in passing we record this fact as a proposition.
% where we will also note that, if the module $M$ in Corollary \ref{coryeni} has rank, then the pair $(M,N)$ is Tor-rigid (for the cases where $M$ is torsion-free, or $R$ is a complete intersection that is quotient of an unramified regular local ring, see also \cite[6.8]{Da2}).The vanishing of $\Tor_i^R(M,N)$ for each positive even integer occurs for such modules $M$ and $N$, in general.

%; see Remark \ref{rk}(ii). 
%Hence we would like to reconsider the corollary for the case where $M$ does not necessarily have rank, but is locally free on $\Ass(R)$, e.g., when $R$ is reduced. In this case $\Tor_i^R(M,N)$ does not vanish for each $i\geq 1$, in general: if $R=k[\![x,y]]\!/(xy)$, $M=R/(x)$ and $N=R/(x^2)$, then $M$ and $N$ are both locally free on $\Ass(R)$, but $\Tor_s^R(M,N)=0$ if and only if $s$ is either a negative integer or a positive even integer; see \cite[page 164]{HW2}.mentioned above is not specific to  the example; this will be observed in the next proposition where we will also note, if the module $M$ in Corollary \ref{coryeni} has rank, then the pair $(M,N)$ is Tor-rigid. This fact is known if $M$ is torsion-free, or $R$ is a complete intersection that is quotient of an unramified regular local ring; see \cite[6.8]{Da2}. 

\begin{prop} \label{propAA} Let $R$ be a one-dimensional local ring and let $M,N\in \md R$ be modules, both of which are locally free on $\Ass(R)$. Assume $\CI_R(N)<\infty$ and $\cx_{R}(N)\leq 1$. 
If $M\otimes_{R}N$ is torsion-free, then $\Tor_{2i}^R(\tf RM,N)=0$ for all $i\geq 1$. 
\end{prop}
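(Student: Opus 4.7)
The plan is to mimic the proof of Theorem~\ref{Main}, exploiting the observation that $\theta^R(M,N)=0$ is used there only to upgrade the vanishing of $\Tor_2$ to the vanishing of all higher Tor modules; for the even-indexed Tor modules alone, the asymptotic two-periodicity of (\ref{Dao}.1) suffices on its own.

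First I will dispose of the case $\cx_R(N)=0$: then $\pd_R(N)\le\dim R=1$, so $\Tor_i^R(-,N)=0$ for every $i\ge 2$. Hereafter assume $\cx_R(N)=1$. Set $U=\tf RM$. Since $\tp RM$ localizes to zero at every associated prime, $U$ remains locally free on $\Ass(R)$, so $\tp RM$ and each $\Tor_i^R(U,N)$ ($i\ge 1$) have finite length. Tensoring $0\to\tp RM\to M\to U\to 0$ with $N$, the image of $\tp RM\otimes_R N$ (of finite length) inside the torsion-free module $M\otimes_R N$ must be zero, so $M\otimes_R N\cong U\otimes_R N$ and in particular $U\otimes_R N$ is torsion-free. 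Next I invoke the quasi-deformation of \ref{Dao2} to replace $R$ by a faithfully flat extension of the form $R=S/(f)$ with $\pd_S(N)<\infty$; because the closed fibre is Artinian, the depth formula under flat extension carries the torsion-freeness of $U$, of $U\otimes_R N$, and the local-freeness of $U,N$ on associated primes over to the extension.

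At this point the argument from the proof of Theorem~\ref{Main} goes through verbatim up to the computation of $\Tor_2$. Namely, by~\ref{kullan} I will choose a short exact sequence $0\to L\to Z\to N\to 0$ with $L$ free, $\CI_R(Z)=0$, and $\pd_S(Z)=1$; tensoring with $U$ identifies $\Tor_i^R(U,Z)\cong\Tor_i^R(U,N)$ for every $i\ge 1$ (the connecting map to $U\otimes_R L$ vanishes, its source being torsion and its target torsion-free) and shows $U\otimes_R Z$ is an extension of the torsion-free modules $U\otimes_R L$ and $U\otimes_R N$, hence torsion-free. Feeding $(U,Z)$ into the change-of-rings sequence~\ref{CRS}, combined with $\pd_S(Z)=1$ and $\depth(U\otimes_R Z)\ge 1$, forces $\Tor_2^R(U,Z)=\Tor_2^R(U,N)=0$. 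Finally, two-periodicity (\ref{Dao}.1) yields $\Tor_{2i}^R(U,N)\cong\Tor_{2i+2}^R(U,N)$ for all $i\ge 1$, and iterating from $\Tor_2^R(U,N)=0$ delivers the stated vanishing of even Tor.

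The main delicacy I anticipate is the quasi-deformation step: one must verify that $U\otimes_R R'$ is still torsion-free and locally free on $\Ass(R')$, and that $(U\otimes_R N)\otimes_R R'$ is still torsion-free. All three follow from the depth formula under the flat extension with Artinian closed fibre, but this is the one place where some bookkeeping is required; everything else is a faithful transcription of the mechanism of Theorem~\ref{Main}, shorn of the $\theta$-vanishing hypothesis and stopped at even indices.
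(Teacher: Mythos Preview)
Your proof is correct, but it follows a genuinely different route from the paper's. The paper does not pass through a quasi-deformation and the change-of-rings sequence~\ref{CRS}; instead, after replacing $M$ by $\tf RM$ and constructing the approximation $0\to F\to Z\to N\to 0$ from~\ref{kullan} over the \emph{original} ring $R$, it invokes the Bergh--Jorgensen result \cite[3.1]{BJVH} to produce, over a flat extension $R\to R'$, an exact sequence $0\to Z'\to K\to\Omega_{R'}(Z')\to 0$ with $K$ free. This single sequence simultaneously yields the injection $\Tor_1^{R'}(\Omega_{R'}(Z'),M')\hookrightarrow M'\otimes_{R'}Z'$ and the eventual two-periodicity. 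To kill the injected $\Tor_1$, the paper uses that $M\otimes_RZ$ is torsion-free \emph{and} locally free on $\Ass(R)$, hence embeds in a free $R$-module; this embedding survives the flat extension, so the torsion module $\Tor_1^{R'}(\Omega_{R'}(Z'),M')$ sits inside a torsion-free $R'$-module and must vanish.

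Your approach instead recycles the mechanism of Theorem~\ref{Main}: reduce via~\ref{Dao2} to $R=S/(f)$ with $\pd_S(N)<\infty$, use~\ref{CRS} to inject the finite-length module $\Tor_2^R(U,Z)$ into the depth-one module $U\otimes_RZ$, and then read off periodicity from~(\ref{Dao}.1). This has the conceptual payoff you advertise---it makes transparent that the hypothesis $\theta^R(M,N)=0$ in Theorem~\ref{Main} is needed only to propagate vanishing to the odd indices---at the cost of the flat-extension bookkeeping you flag. One small point worth making explicit: once you assume $\cx_R(N)=1$, the local-freeness of $N$ on $\Ass(R)$ forces $\fm\notin\Ass(R)$, so $R$ is automatically Cohen--Macaulay; this is what legitimizes your passage from ``torsion-free'' to ``$\depth\ge 1$'' both before and after the quasi-deformation. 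The paper's embed-into-free argument sidesteps this issue entirely.
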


\begin{proof} We may replace $M$ with $\tf RM$, and assume $M$ is torsion-free. %, and proceed to prove $\Tor_{2i}^R(M,N)=0$ for all $i\geq 1$. %notice $\tf RM$ is a torsion-free module which is locally free on $\Ass(R)$ such that $\tf RM\otimes_RN\cong M\otimes_RN$. Hence we may assume $M$ is torsion-free, and proceed to prove $\Tor_{2i}^R(M,N)=0$ for all $i\geq 1$. Since $\Tor_1^R(M,N)$ is torsion and $M$ is torsion-free, 
There is a short exact sequence of the form
\begin{equation} \tag{\ref{propAA}.1}
0 \to F \to  Z \to N \to 0,
\end{equation}
where $F$ is free, $\CI_R(Z)=0$ and $\cx_R(Z)=\cx_R(N)$; see \ref{kullan}.
It follows from \cite[3.1]{BJVH} that there exists a flat local map $R\to R'$ and an exact sequence in $\md R'$ of the form
\begin{equation}\tag{\ref{propAA}.2}
0 \to Z' \to K \to \Omega_{R'}(Z') \to 0,
\end{equation}
where $Z'=Z\otimes_RR'$, $\depth_{R'}(Z')=\depth_{R'}(K)$ and $\pd_{R'}(K)<\infty$ (note $\Omega_{R'}(Z')=\Omega_R(Z)'$). Since $\depth(R')-\depth_{R'}(Z')=\depth(R)-\depth_R(Z)=\CI_R(Z)=0$, we see $K$ is a free $R'$-module. Hence (\ref{propAA}.2) yields an injection $\Tor_1^{R'}(\Omega_{R}^1(Z)', M') \hookrightarrow M'\otimes_{R'}Z'$. 

Tensoring (\ref{propAA}.1) with $M$ over $R$, we see that $M\otimes_R Z$ is torsion-free. Moreover, as $M\otimes_R Z$ is locally free on $\Ass(R)$, it embeds into a free $R$-module. Therefore $M'\otimes_{R'}Z'$ torsion-free over $R'$. Hence $\Tor_1^{R'}(\Omega_{R}(Z)', M')$, being a torsion $R'$-module, vanishes. This implies $\Tor_{2i}^R(M,N)=0$ for all $i\geq 1$; see (\ref{propAA}.1).
\end{proof}
%%%%%%%%%%% Details of the proof %%%%%%%%%%%%%%%%%%%%%%%%%%%%%%%%%%%%
%Let $\fq\in \Ass(R')$ and set $\fp=\fq \cap R$. Since $0=\depth(R'_{\fq})=\depth(R_{\fp})+
%\depth(R'_{\fq}/\fp R'_{\fq})$ \cite[23.3]{Mat}, we have $\fp \in \Ass(R)$. Set $X=\Tor_1^{R}(\Omega_{R}^1(Z), M)$. Note $X$ is a torsion $R$-module, i.e., $X_{\fp}=0$. Hence $X'_{\fq} \cong X_{\fp}\otimes_{R_{\fp}}R'_{\fq}$ shows $X'_{\fq}=0$, i.e., $X'$ is a torsion $R'$-module.

%Note, $M\otimes_RZ$, being locally free on $\Ass(R)$, embeds into a free $R$-module; see, for example, \ref{AuBrobs}. So $M'\otimes_{R'}Z'$ is a torsion-free $R'$-module. Hence $\Tor_1^{R'}(\Omega_{R}^1(Z)', M')=0$, which implies the vanishing of $\Tor_{2i}^R(M,N)$ for all $i\geq 1$; see (\ref{propAA}.1).
%%%%%%%%%%%%%%%%%%%%%%%%%%%%%%%%%%%%%%%%%%%%%%%%%%%%%%%%%%%

Our next observation maybe of independent interest.

\begin{lem} \label{lemiyi} Let $R$ be a one-dimensional local ring and let $M\in \md R$.
\begin{enumerate}[\rm(a)]
\item Assume $M$ is torsion-free. Then $\CI_R(M)<\infty$ if and only if $\CI_R(M^{\ast})<\infty$.
\item If $\CI_R(M)<\infty$, $\Tor_{i}^R(M, M^{\ast})=0$ for all $i\gg 0$, and $M\otimes_{R}M^{\ast}$ is a nonzero torsion-free module, then $M$ is free.
\end{enumerate}
\end{lem}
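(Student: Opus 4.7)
For part (a), the plan is to use torsion-freeness to force $R$ Cohen--Macaulay of dimension one and both $M$ and $M^{\ast}$ to be MCM with CI-dim zero, after which the two directions follow from \ref{AGProp}(vi) together with the identification of $M^{\ast}$ as a syzygy of $\Tr M$. Since $M$ is torsion-free, $\depth M\geq 1$, and the identity $\CI_R(M)=\depth R-\depth M\geq 0$ from \ref{AGProp}(iii) forces $\depth R\geq 1$; as $\dim R=1$, $R$ is Cohen--Macaulay, $M$ is MCM, $\CI_R(M)=0$, and \ref{AGProp}(vi) immediately gives $\CI_R(M^{\ast})=0$. For the converse, $M^{\ast}$ is always torsion-free (as the kernel of $F_0^{\ast}\to F_1^{\ast}$ in a finite presentation $F_1\to F_0\to M\to 0$), so the same argument yields $\CI_R(M^{\ast})=0$. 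Reading the presentation sequence $0\to M^{\ast}\to F_0^{\ast}\to F_1^{\ast}\to \Tr M\to 0$ identifies $M^{\ast}$ with $\Omega^{2}(\Tr M)$ up to free summands; the invariance of finiteness of CI-dim under syzygies combined with the biconditional $\CI_R(M)=0\iff \CI_R(\Tr M)=0$ of \ref{AGProp}(vi) returns $\CI_R(M)<\infty$.

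For part (b), the plan is first to boost the hypotheses so that $M$ is MCM with $\Tor_i^R(M,M^{\ast})=0$ in all positive degrees, and then to reduce via a codimension-one quasi-deformation to the hypersurface case where Conjecture \ref{HWC} is known by \cite[3.7]{HW1}. By (a), $\CI_R(M^{\ast})=0$ and $M^{\ast}$ is MCM. Since $\Tor$ is symmetric, \ref{AGProp}(vii) applied with $\CI_R(M^{\ast})=0$ upgrades the eventual vanishing of $\Tor_i^R(M,M^{\ast})$ to vanishing for every $i\geq 1$. The depth formula \ref{AGProp}(viii) now yields $\depth M+\depth M^{\ast}=\depth R+\depth(M\otimes_R M^{\ast})=1+1$, so $\depth M=1$; hence $M$ is MCM, $\CI_R(M)=0$, and $\cx_R(M)=\cx_R(M^{\ast})<\infty$ by (vi).

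Using \ref{Dao2}, I would then choose a codimension-one quasi-deformation $R\to R'\twoheadleftarrow S$ with $\dim(R'/\fm R')=0$, $R'=S/(f)$ a hypersurface, and $\pd_S(M^{\ast}\otimes_R R')<\infty$. The zero-dimensional closed fibre preserves torsion-freeness of $M\otimes_R M^{\ast}$ under the flat base change to $R'$, so the hypersurface case of Conjecture \ref{HWC} (see \cite[3.7]{HW1}) applied to $M\otimes_R R'$ forces $M\otimes_R R'$ free over $R'$; faithful flatness of $R\to R'$ then delivers $M$ free over $R$. The main obstacle I foresee is twofold: first, guaranteeing that a codimension-one quasi-deformation really exists, which requires $\cx_R(M^{\ast})\leq 1$ (expected from a Bergh--Jorgensen-type inequality $\cx_R(M)+\cx_R(M^{\ast})\leq \cx_R(k)$ under the $\Tor$-vanishing hypothesis, with a higher-codimension cut-down argument otherwise); second, the conclusion \cite[3.7]{HW1} is normally stated for modules with rank, so one may need to check rank for $M\otimes_R R'$ or argue locally at the minimal primes of $R'$.
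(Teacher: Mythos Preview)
Your proposal has genuine gaps in both parts.

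In the converse of (a), you pass from $\CI_R(M^{\ast})=\CI_R(\Omega^{2}\Tr M)<\infty$ to $\CI_R(\Tr M)<\infty$ by ``invariance of finiteness of CI-dim under syzygies''. This runs the implication in the wrong direction: while $\CI_R(X)<\infty$ implies $\CI_R(\Omega X)<\infty$, the converse --- that finiteness of CI-dimension climbs back up through a short exact sequence $0\to\Omega X\to F\to X\to 0$ --- is not known in general. The paper handles this step with a result of Sather-Wagstaff \cite[3.6]{Sean} to obtain $\CI_R(\Tr M)<\infty$ directly from $\CI_R(M^{\ast})<\infty$, and then, crucially, uses the torsion-freeness of $M$ (which you never invoke in your converse) to get $\Ext^{1}_R(\Tr M,R)=0$, forcing $\CI_R(\Tr M)=0$ via \ref{AGProp}(iii). (A minor issue in your forward direction: ``$M$ torsion-free $\Rightarrow\depth M\geq 1$'' is false when $\depth R=0$, though the conclusion survives by a direct check.)

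In (b), two problems. First, you open by invoking (a), but (a) assumes $M$ torsion-free, which is not among the hypotheses of (b); without this you cannot obtain $\CI_R(M^{\ast})=0$ at the outset, and \ref{AGProp}(vii) then only kills $\Tor_i$ for $i\geq\CI_R(M)+1$, possibly leaving $\Tor_1$ alive. The paper instead first shows each $\Tor_i^R(M,M^{\ast})$ has finite length and invokes \cite[3.6]{Bounds} to annihilate them all, after which the depth formula yields $\CI_R(M)=0$. Second, your reduction to \cite[3.7]{HW1} via a quasi-deformation does not work: \ref{Dao2} produces $R'=S/(f)$ with $\pd_S$ finite on the relevant module, but $S$ is \emph{not} regular in general, so $R'$ is not a hypersurface and the Huneke--Wiegand theorem does not apply (your own complexity and rank caveats only compound this). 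The paper's endgame is different and short: once $\Tor_i^R(M,M^{\ast})=0$ for all $i\geq 1$ and $\CI_R(M)=0$, one has $\Tor_i^R(M,\Tr M)=0$ for $i\gg 0$, so $\Tor_1^R(M,\Tr M)=0$ by \ref{AGProp}(vii), and then $M$ is free by \cite[3.9]{Yo}.
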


\begin{proof} (a) It suffices to assume $\CI(M^{\ast})<\infty$ and prove $\CI_R(\Tr M)=0$; see \ref{AGProp}(vi). Hence we assume $\CI_R(M^{\ast})<\infty$ so that $\CI_R(\Tr M)<\infty$; see \cite[3.6]{Sean}.

Let $p\in \Ass(R)$. Then, since $\CI_{R_{\fp}}(M_{\fp})=0$, it follows $M_{\fp}$ is totally reflexive over $R_{\p}$. Therefore $\Ext^1_R(\Tr M, R)=0$ so that $\CI_R(\Tr M)=0$; see \ref{AGProp}(iii).

(b) Note that each $\Tor^R_i(M,M^{\ast})$ has finite length for $i\geq 1$; see \ref{AGProp}(vii). Hence it follows from \cite[3.6]{Bounds} that $\Tor_{i}^R(M, M^{\ast})=0$ for all $i\geq 1$. Since $M\otimes_{R}M^{\ast}$ is nonzero and torsion-free, the depth formula implies $\CI_R(M)=0$; see \ref{AGProp}(viii). Therefore, since $\Tor_{i}^R(M, \Tr M)$ for all $i\gg 0$ and $\CI_R(M)=0$, we conclude that $\Tor_{1}^R(M, \Tr M)=0$; see \ref{AGProp}(vii). Thus $M$ is free; see, for example, \cite[3.9]{Yo}. 
\end{proof}

If $R=S/(f)$, where $(S, \mathfrak{n})$ is a two-dimensional regular local ring and $0\neq f \in \mathfrak{n}$, it follows from a result of Huneke and Wiegand \cite[3.7]{HW1} that $M\otimes_RM^{\ast}$ has torsion for each non-free, torsion-free module $M\in \md R$ with rank.  Hence Conjecture \ref{HWC} is true  over hypersurface rings. In the following, we will generalize this fact and show that it carries over non-hypersurface rings under mild conditions; see Corollaries \ref{extend}, \ref{corMF} and \ref{extend2}.

\begin{cor} \label{extend} Let $R$ be a one-dimensional Cohen-Macaulay ring such that $R=S/(f)$ for some local ring $(S, \mathfrak{n})$ and $f \in \mathfrak{n}$ is a non zero-divisor on $S$. Let $M \in \md R$ be a module such that $M\otimes_RM^{\ast}$ is torsion-free, and $\pd_{S}(M)<\infty$ or $\pd_{S}(M^{\ast})<\infty$. 

If $M$ is torsion-free, $\len(\Tor_i^R(M,M^{\ast}))<\infty$ for all $i\gg 0$, and  $\theta^R(M,M^{\ast})=0$, then $M$ is free. In particular, if $M$ has rank and $M\otimes_RM^{\ast}$ is nonzero (e.g., $M$ is a nonzero torsion-free module with rank), then $M$ is free.
\end{cor}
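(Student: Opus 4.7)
The overall strategy is to produce $\Tor_i^R(M,M^{\ast})=0$ for every $i\ge 1$ and then invoke Lemma \ref{lemiyi}(b) to conclude that $M$ is free.

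For the main statement, I would apply Theorem \ref{MainIntro} with $N=M^{\ast}$. Its three hypotheses coincide with the assumed $\pd_S$-condition, the finite length of $\Tor$ eventually, and the vanishing of $\theta^R(M,M^{\ast})$; and $M\otimes_R M^{\ast}$ is torsion-free by the standing hypothesis of the corollary. For nontriviality, $M^{\ast}\neq 0$: since $M$ is nonzero and torsion-free over the one-dimensional local Cohen-Macaulay ring $R$, some localization $M_{\fp}$ at $\fp\in\Ass(R)$ is nonzero, and over the Artinian local ring $R_{\fp}$ every nonzero finitely generated module has nonzero dual (Nakayama gives a surjection onto the residue field, whose dual is the nonzero socle of $R_{\fp}$). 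Hence $M\otimes_R M^{\ast}\neq 0$, and Theorem \ref{MainIntro} yields $\Tor_i^R(M,M^{\ast})=0$ for all $i\ge 1$. To invoke Lemma \ref{lemiyi}(b), one needs $\CI_R(M)<\infty$: if $\pd_S(M)<\infty$ this is immediate, while if only $\pd_S(M^{\ast})<\infty$ is assumed, Lemma \ref{lemiyi}(a)---applicable since $M$ is torsion-free---transfers $\CI_R(M^{\ast})<\infty$ back to $\CI_R(M)<\infty$. Lemma \ref{lemiyi}(b) then yields that $M$ is free.

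For the ``in particular'' assertion, the plan is to derive the three extra hypotheses (torsion-freeness of $M$, finite-length $\Tor$, vanishing of theta) from the assumption that $M$ has rank and $M\otimes_R M^{\ast}\neq 0$. Since $M$ has rank, $M$ is locally free on $\Ass(R)=\Min(R)$, so $\Tor_i^R(M,M^{\ast})$ is supported at $\{\fm\}$ and has finite length for every $i\ge 1$; moreover, $[M]=0$ in $\overline{\G}(R)_{\QQ}$ by Remark \ref{rmkyeni}. To obtain both $\theta^R(M,M^{\ast})=0$ and the torsion-freeness of $M$ in one stroke, I would apply Corollary \ref{coryeni} (in the form recorded in Remark \ref{rmkyeni}) to the pair $(M,M^{\ast})$ when $\pd_S(M^{\ast})<\infty$, or to the swapped pair $(M^{\ast},M)$ when only $\pd_S(M)<\infty$ is assumed (noting that $M^{\ast}$ also has rank $r$, so $[M^{\ast}]=0$ in $\overline{\G}(R)_{\QQ}$, while $\CI_R(M)<\infty$ together with $\cx_R(M)\le 1$ follows unconditionally from $\pd_S(M)<\infty$ and from $R=S/(f)$ being a codimension-one deformation). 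Either invocation returns $\Tor_i^R(M,M^{\ast})=0$ and the torsion-freeness of both $M$ and $M^{\ast}$, reducing everything to the main statement.

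The subtlest point is the case ``$\pd_S(M)<\infty$ only'' within the ``in particular'' reduction: one cannot a priori verify $\CI_R(M^{\ast})<\infty$, since Lemma \ref{lemiyi}(a) requires torsion-freeness of $M$---precisely what one is trying to conclude. The swap above, placing $M$ on the ``$N$-side'' of Corollary \ref{coryeni} where the finite complete-intersection-dimension and complexity-at-most-one hypotheses live, circumvents this circularity by routing these assumptions through $M$ rather than $M^{\ast}$.
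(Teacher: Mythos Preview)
Your proof is correct and follows essentially the same approach as the paper. Both arguments obtain $\Tor_i^R(M,M^{\ast})=0$ from Theorem~\ref{MainIntro}/\ref{Main}, conclude freeness via Lemma~\ref{lemiyi}(b), and handle the ``in particular'' clause through Remark~\ref{rmkyeni}; the only differences are organizational---the paper first passes through Lemma~\ref{lemiyi}(a) and \ref{AGProp}(vi) to feed $\CI_R$ and $\cx_R$ into Theorem~\ref{Main}, whereas you invoke the $\pd_S$-version Theorem~\ref{MainIntro} directly and only call Lemma~\ref{lemiyi}(a) afterward, and your explicit swap in the rank case makes precise a step the paper leaves implicit.
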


\begin{proof} Assume $M$ is torsion-free. Then, by Lemma \ref{lemiyi}(a), we have $\CI_R(M)=0$. So $\cx(M)=\cx(M^{\ast})\leq 1$; see \ref{AGProp}(vi). Therefore Theorem \ref{Main} yields the vanishing of $\Tor_{i}^R(M, M^{\ast})$ for all $i\geq 1$. Now $M$ must be free by Lemma \ref{lemiyi}(b).

For the case where $M$ has rank and $M\otimes_RM^{\ast}$ is nonzero, it follows from Remark \ref{rmkyeni} that $\Tor_i^R(M,M^{\ast})=0$ for all $i\geq 1$ so that $M$ is torsion-free; see \ref{AGProp}(viii). This implies $M$ is free, for example, by the previous part.
\end{proof}

\begin{rmk} We note that, one can use  \cite[4.2 and 4.4.7]{AvBu} and \cite[4.6]{HW1}, and give a proof to the first part of Corollary \ref{extend} for Gorenstein rings without appealing to Theorem \ref{Main}.
\pushQED{\qed} 
\qedhere 
\popQED	
\end{rmk}
%Let $R$ be a one-dimensional Gorenstein ring such that $R=S/(f)$, where $(S, \fn)$ is a local ring and $0\neq f\in \fn$ is a non zero-divisor on $S$. Let $M\in \md R$ be a torsion-free module such that $\pd_S(M)<\infty$, $\len(\Tor_i^R(M,M^{\ast}))<\infty$ for all $i\gg 0$ and $\theta^R(M,M^{\ast})=0$. Assume $M\otimes_RM^{\ast}$ is torsion-free.%, then $M$ is free.

%Note it follows that $\Tor_{2i-1}^R(M^{\ast}, M) \cong \Ext^{2i}_R(M,M)$ and $\Tor_{2i}^R(M^{\ast}, M) \cong \Ext^{2i-1}_R(M,M)$ for all $i\geq 1$; see \cite[4.4.7]{AvBu} and \cite[3.6 and 3.7]{BJVH}. Thus, since $\Ext_R^1(M,M)=0$, we have $\Tor_{2}^R(M^{\ast}, M)=0$; see \cite[4.6]{HW1}. As $\theta^R(M,M^{\ast})=0$, this yields $\Tor_{1}^R(M,M^{\ast})=0$. In particular, $\Ext^2_R(M,M)=0$ and so $M$ is free, for example, by \cite[4.2]{AvBu}. 

If $(S, \mathfrak{n})$ is a Cohen-Macaulay local ring and $f\in \mathfrak{n}^2$ is a non zero-divisor on $S$, then $f$ has a reduced matrix factorization $(\varphi, \psi)$ over $S$. In this case, $\coker (\varphi)$ is a non-free, maximal Cohen-Macaulay module over $S/(f)$ with projective dimension one over $S$; see \cite{HUB}.%and \cite[Chapter 8]{Yo}. 

Recall that a local ring $S$ is called \emph{G-regular} \cite{TakG} if each totally reflexive module in $\md S$ is free. Note that each regular ring, as well as each Golod ring, is G-regular.

The following, advertised in Corollary \ref{sonucbir}, follows from Corollary \ref{extend} and \cite[2.13]{TakG}.

\begin{cor} \label{corMF} Let $R=S/(f)$ be a one-dimensional Cohen-Macaulay ring, where $(S, \mathfrak{n})$ is a local ring and $f \in \mathfrak{n}^2$ is a non zero-divisor on $S$. If $(\varphi, \psi)$ is a reduced matrix factorization of $f$, and $M=\coker (\varphi)$ has rank, then $M\otimes_{R}M^{\ast}$ has torsion. In particular, if $S$ is G-regular and $M \in \md R$ is a non-free, totally reflexive $R$-module which has rank, then $M\otimes_{R}M^{\ast}$ has torsion. 
\end{cor}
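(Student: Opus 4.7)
The plan is to reduce both assertions to the ``in particular'' clause of Corollary \ref{extend} by a contradiction argument. Assuming $M\otimes_R M^{\ast}$ is torsion-free, it will suffice to verify for the module $M$ in question that (1) $\pd_S(M)<\infty$; (2) $M$ has rank; (3) $M\otimes_R M^{\ast}\neq 0$; and (4) $M$ is not free over $R$. Corollary \ref{extend} would then force $M$ to be free, contradicting (4), so torsion must in fact be present in $M\otimes_R M^{\ast}$.

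For the first assertion, I will set $M=\coker(\varphi)$. Condition (2) is part of the hypothesis. For (1), the exact sequence $0\to S^n \xrightarrow{\varphi} S^n \to M\to 0$ is itself an $S$-free resolution of length one, so $\pd_S(M)\leq 1$. For (4), since $\varphi$ has entries in $\fn$, the 2-periodic complex $\cdots \to R^n \xrightarrow{\psi} R^n \xrightarrow{\varphi} R^n \to M\to 0$ serves as a \emph{minimal} $R$-free resolution of infinite length, so $\pd_R(M)=\infty$ and in particular $M$ is non-free. For (3), I first observe that $S$ itself is Cohen-Macaulay (from $R=S/(f)$ Cohen-Macaulay and $f$ a non zero-divisor on $S$, using $\depth(S)=\depth(R)+1=\dim(S)$), so Auslander-Buchsbaum yields $\depth_S(M)\geq \depth(S)-1=\depth(R)$; hence $M$ is maximal Cohen-Macaulay and thus torsion-free over the one-dimensional ring $R$. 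Being nonzero, torsion-free, and of rank $r$, the rank must be positive, so $M^{\ast}$ has the same positive generic rank and is in particular nonzero. Nakayama's lemma then gives $M\otimes_R M^{\ast}\neq 0$, and Corollary \ref{extend} applies.

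For the ``in particular'' clause, suppose $S$ is G-regular and $M$ is a non-free totally reflexive $R$-module with rank. I will decompose $M=N\oplus R^a$ with $N$ having no free summand; because $M$ is non-free, $N\neq 0$, and $N$ inherits positive rank from $M$ (using that $N$ is totally reflexive, hence torsion-free). Takahashi's correspondence \cite[2.13]{TakG} then supplies a reduced matrix factorization $(\varphi,\psi)$ of $f$ with $N\cong \coker(\varphi)$, and the first assertion applied to $N$ gives torsion in $N\otimes_R N^{\ast}$. Since $N\otimes_R N^{\ast}$ is a direct summand of $M\otimes_R M^{\ast}$, the latter has torsion as well. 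The only delicate point throughout the argument will be confirming $M\otimes_R M^{\ast}\neq 0$ so that the ``in particular'' form of Corollary \ref{extend} can be invoked; this reduces to establishing positive generic rank, which ultimately follows from the matrix-factorization structure forcing $M$ to be maximal Cohen-Macaulay and hence torsion-free.
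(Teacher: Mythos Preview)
Your proof is correct and follows the same approach as the paper, which simply records that the result ``follows from Corollary \ref{extend} and \cite[2.13]{TakG}.'' You have supplied the details the paper leaves implicit: that $\pd_S(M)\le 1$ and $M$ is nonzero torsion-free from the matrix-factorization structure (so the ``in particular'' clause of Corollary \ref{extend} applies), that $M$ is non-free because the reduced matrix factorization yields a minimal periodic $R$-resolution, and that the totally reflexive case reduces to the first via stripping free summands and invoking Takahashi's correspondence.
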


Here is an example for which we can employ Corollary \ref{corMF}; note the ring in question is a complete intersection, but not a hypersurface; see also \cite[4.17]{Ce} and cf. \cite[3.7]{HW1}.

\begin{eg} \label{egMF} Let $R=S/(f)$, where $S=\CC[\![x,y,z]\!]/(xz-y^2)$ and $f=x^3-z^2$. Then 
it follows that $R=\CC[\![t^4, t^5, t^6]\!]$ is a one-dimensional domain. Moreover,
$$(\varphi, \psi)=
\left(
\begin{pmatrix}
-z & x \\
x^2 & -z 
\end{pmatrix}
, 
\begin{pmatrix}
z & x \\
x^2 & z 
\end{pmatrix}
\right) 
$$
is a reduced matrix factorization of $f$ over $S$. \pushQED{\qed} 
So, by Corollary \ref{corMF}, $M\otimes_{R}M^{\ast}$ has torsion, where $M \in \md R$ is the module given by the exact sequence $S^{\oplus 2} \stackrel{\varphi}{\rightarrow}S^{\oplus 2} \to M \to 0$. \qedhere
\popQED	
\end{eg}

%Recall that a local ring $S$ is called \emph{G-regular} \cite{TakG} if each totally reflexive module in $\md S$ is free. Note that each regular ring and Golod ring is G-regular.

%\begin{cor}  \label{HWcor} Let $R$ be a one-dimensional ring such that  $R=S/(f)$ for some G-regular local ring $(S, \mathfrak{n})$ and $0\neq f \in \mathfrak{n}^2$ is a non zero-divisor on $S$. Let $M \in \md R$ be a non-free, totally reflexive $R$-module. If $M$ has rank (e.g. $R$ is a domain), then $M\otimes_{R}M^{\ast}$ has torsion. 
%\end{cor}

%\begin{proof} As $R$ is Cohen-Macaulay and $\Gdim_R(M)=0$, we conclude that $\pd_S(M)=1<\infty$. Hence the result follows from Corollary \ref{extend}.
%\end{proof}

Next is another result that follows from Corollary \ref{extend}; it is an extension of the result of Huneke and Wiegand mentioned preceeding Corollary \ref{extend}. It also extends \cite[4.17]{Ce}, which is limited to complete intersection rings.

\begin{cor} \label{extend2} Let $R$ be a one-dimensional local ring and let $M \in \md R$ be a non-free module. Assume $M$ has rank (e.g., $R$ is a domain). Assume further $\CI_R(M)<\infty$ and $\cx_R(M)\leq 1$ (e.g., $R$ is a hypersurface). If $M\otimes_RM^{\ast}$ is not zero, then it has torsion. In particular, if $M$ is torsion-free, then $M\otimes_RM^{\ast}$ has torsion.
\end{cor}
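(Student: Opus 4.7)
The plan is to reduce the main claim to Corollary~\ref{coryeni} (equivalently Remark~\ref{rmkyeni}), applied with the two arguments swapped, and then close with Lemma~\ref{lemiyi}(b). The key observation is that since $\Tor$ and tensor product are symmetric in their two arguments, one may invoke Corollary~\ref{coryeni} using $M$ itself (and not $M^{\ast}$) in the slot that requires finite complete intersection dimension and bounded complexity; in particular, one never has to establish these homological conditions for the dual.

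The ``in particular'' clause is immediate from the main statement: if $M$ is torsion-free and nonzero with rank $r$, then $r\geq 1$ and $(M\otimes_R M^{\ast})_{\fp}\cong R_{\fp}^{r^2}$ at each $\fp\in\Ass(R)$, so $M\otimes_R M^{\ast}$ is nonzero and hence has torsion by the main claim. So it remains to prove the main assertion.

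I argue by contradiction: assume $M\otimes_R M^{\ast}$ is nonzero and torsion-free. One may assume $R$ is Cohen--Macaulay, since otherwise the opening of the proof of Corollary~\ref{coryeni} already forces $M$ to be free. Since $M\otimes_R M^{\ast}\neq 0$ and $M$ has rank, that rank $r$ must satisfy $r\geq 1$; indeed if $r=0$ then $M$ is torsion, whence $M^{\ast}=\Hom_R(M,R)=0$ as $R$ has depth one, contradicting the nonzeroness of the tensor product. Therefore $M^{\ast}$ also has rank $r$, so by Remark~\ref{rmkyeni} the class $[M^{\ast}]=0$ in $\overline{\G}(R)_{\QQ}$, and $M$ is locally free on $\Ass(R)$ because $M$ has rank.

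Now apply Corollary~\ref{coryeni} to the pair $(M^{\ast},M)$, i.e.\ take the corollary's ``$M$'' to be $M^{\ast}$ and its ``$N$'' to be $M$: by hypothesis $\CI_R(M)<\infty$ and $\cx_R(M)\leq 1$, $M$ is locally free on $\Ass(R)$, $[M^{\ast}]=0$ in $\overline{\G}(R)_{\QQ}$, and the tensor product $M^{\ast}\otimes_R M\cong M\otimes_R M^{\ast}$ is torsion-free. The corollary then yields $\Tor_i^R(M,M^{\ast})=0$ for all $i\geq 1$ and that $M$ is torsion-free. At this point Lemma~\ref{lemiyi}(b) applies --- with $\CI_R(M)<\infty$, vanishing $\Tor$, and $M\otimes_R M^{\ast}$ nonzero and torsion-free --- and concludes that $M$ is free, contradicting the hypothesis. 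The only conceptual hurdle is realizing that, thanks to the symmetry of $\Tor$ and $\otimes$, Corollary~\ref{coryeni} can be invoked with the roles of $M$ and $M^{\ast}$ reversed, thereby bypassing any need to transfer complete intersection dimension or complexity from $M$ to $M^{\ast}$.
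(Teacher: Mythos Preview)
Your proof is correct and follows essentially the same route the paper takes. The paper states that Corollary~\ref{extend2} ``follows from Corollary~\ref{extend}'', and the relevant second paragraph of that proof does exactly what you do: apply Remark~\ref{rmkyeni} (i.e.\ Corollary~\ref{coryeni}) to obtain the vanishing of $\Tor_i^R(M,M^{\ast})$ for all $i\geq 1$, and then invoke Lemma~\ref{lemiyi}(b) to force $M$ to be free. Your explicit observation that one should assign the roles ``$N$''$=M$ and ``$M$''$=M^{\ast}$ in Corollary~\ref{coryeni} (so that the finite $\CI$-dimension and complexity hypotheses fall on $M$ rather than on $M^{\ast}$) is precisely the point that makes the argument go through without transferring these invariants to the dual; the paper leaves this implicit.
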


%\begin{proof} Note that $R$ is a Cohen-Macaulay ring. We may choose a codimension one quasi-deformation $R \to R' \stackrel{\alpha}{\twoheadleftarrow} S$ with zero-dimensional closed fibre; see \ref{Dao2}. Hence we may assume $R=R'=S/(f)$ for some non zero-divisor $f$ on $S$ with $\pd_S(M)<\infty$. Now the result follows from Corollary \ref{extend}(ii).
%\end{proof}

Under a similar setting, the conclusion of Corollary \ref{extend2} still holds in case $M$ does not have rank, but the length of certain Tor modules coincide; this fact relies upon Corollary \ref{extend} and a result of Bergh \cite{BVH}. We can see this as follows:

\begin{cor} \label{t2} Let $R$ be a one-dimensional Cohen-Macaulay ring such that $R=S/(f)$ for some local ring $(S, \mathfrak{n})$ and $f \in \mathfrak{n}$ is a non zero-divisor on $S$. Let $M \in \md R$ be a non-free, torsion-free module such that either $\pd_{S}(M)<\infty$ or $\pd_{S}(M^{\ast})<\infty$. Assume $\len(\Tor_n^R(M,M^{\ast}))=\len(\Tor_{n+q}^R(M,M^{\ast}))<\infty$ for some even integer $n\geq 1$ and an odd integer $q\geq 1$. Then $M\otimes_{R}M^{\ast}$ has torsion.
\end{cor}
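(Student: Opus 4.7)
The plan is to proceed by contradiction: suppose $M\otimes_RM^{\ast}$ is torsion-free, and then invoke Corollary \ref{extend} to conclude that $M$ is free, contradicting the hypothesis. Corollary \ref{extend} requires, beyond what is already given, two additional inputs: that $\len(\Tor_i^R(M,M^{\ast}))<\infty$ for all $i\gg 0$, and that $\theta^R(M,M^{\ast})=0$.

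The first input is essentially structural. Since $M$ is torsion-free and $\pd_S(M)<\infty$ or $\pd_S(M^{\ast})<\infty$, Lemma \ref{lemiyi}(a) and its proof give $\CI_R(M)=0$, and hence also $\CI_R(M^{\ast})<\infty$; moreover, since $R=S/(f)$ with one of the $S$-projective dimensions finite, the change-of-rings long exact sequence underlying \ref{CRS} forces $\cx_R(M^{\ast})\leq 1$. Localizing at any non-maximal (equivalently, minimal) prime of the one-dimensional ring $R$ and applying \ref{AGProp}(ii) and \ref{AGProp}(vii) then shows that $\Tor_i^R(M,M^{\ast})$ is supported only at $\fm$ for all $i>\CI_R(M^{\ast})$, hence has finite length. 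This also legitimizes the definition of $\theta^R(M,M^{\ast})$ via \ref{Dao2}.

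The principal obstacle is verifying $\theta^R(M,M^{\ast})=0$ from the hypothesis that $\len(\Tor_n^R(M,M^{\ast}))=\len(\Tor_{n+q}^R(M,M^{\ast}))$ for a \emph{single} pair $(n,q)$ with $n$ even and $q$ odd. The change-of-rings long exact sequence in \ref{CRS}, combined with the finite $S$-projective dimension of $M$ or $M^{\ast}$, shows that $\Tor_i^R(M,M^{\ast})$ is eventually $2$-periodic, so its lengths stabilize to an even value $\ell_e$ and an odd value $\ell_o$ with $\theta^R(M,M^{\ast})=\ell_e-\ell_o$. If both indices $n$ and $n+q$ already lie in the periodic range, the given equality of lengths coincides with $\ell_e=\ell_o$ and we are done. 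For the remaining case, I intend to invoke the rigidity result of Bergh \cite{BVH}, which, in the presence of the finite complete intersection dimension and bounded complexity established above, allows the equality of lengths at the two specific indices $n$ and $n+q$ (with $q$ odd) to propagate into the periodic range and force $\ell_e=\ell_o$.

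Once both inputs are in hand, Corollary \ref{extend} applies and gives that $M$ is free, which contradicts the hypothesis that $M$ is non-free. Thus $M\otimes_RM^{\ast}$ must have torsion.
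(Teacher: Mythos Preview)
Your overall strategy --- verify the two missing hypotheses of Corollary~\ref{extend} and then apply it --- is exactly the paper's approach, but the execution has two related gaps.

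First, you overlook that the periodicity is global, not merely eventual. Since $M$ (and $M^{\ast}$) is torsion-free over the one-dimensional Cohen--Macaulay ring $R$, one has $\depth_S M=\depth_R M=1$, while $\depth S=\depth R+1=2$; Auslander--Buchsbaum then forces $\pd_S(M)=1$ (respectively $\pd_S(M^{\ast})=1$). The change-of-rings sequence therefore yields $\Tor_i^R(M,M^{\ast})\cong\Tor_{i+2}^R(M,M^{\ast})$ for \emph{every} $i\geq 1$, which is precisely what the paper asserts. Since $n\geq 2$ and $n+q\geq 1$ already lie in this range, the hypothesis directly gives both $\len(\Tor_i^R(M,M^{\ast}))<\infty$ for all $i\geq 1$ and $\theta^R(M,M^{\ast})=0$; there is no ``remaining case.''

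Second, your proposed use of Bergh misreads \cite[3.2]{BVH}: that result propagates \emph{vanishing} of Tor (from two indices of opposite parity, under finite CI-dimension), not equality of nonzero lengths. The paper applies it correctly but in a different place: at each nonmaximal prime $\fp$ the finite-length hypothesis gives $\Tor_n^R(M,M^{\ast})_{\fp}=\Tor_{n+q}^R(M,M^{\ast})_{\fp}=0$, and Bergh together with $\CI_{R_{\fp}}(M_{\fp})=0$ yields $\Tor_i^R(M,M^{\ast})_{\fp}=0$ for all $i\geq 1$, hence finite length globally. Your own route to finite length via \ref{AGProp}(ii) and (vii) alone is incomplete, since (vii) presupposes eventual vanishing at the localization, which you have not established without feeding in either Bergh (as the paper does) or the full periodicity above.
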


\begin{proof} Notice $\CI_R(M)=0$ and $1=\cx_R(M)=\cx_R(M^{\ast})$; see Lemma \ref{lemiyi}(a) and \ref{AGProp}(vi). Let $\fp\in \Supp_R(M)$ be a nonmaximal prime ideal of $R$. Then it follows $\CI_{R_{\fp}}(M_{\fp})=0$ and $\Tor_n^R(M,M^{\ast})_{\fp}=\Tor_{n+q}^R(M,M^{\ast})_{\fp}=0$; see \ref{AGProp}(ii). Therefore, by \cite[3.2]{BVH}, we have $\len(\Tor_{i}^R(M,M^{\ast})<\infty$ for each $i\geq 1$. Also, since $\Tor_i^R(M,M^{\ast})\cong  \Tor_{i+2}^R(M,M^{\ast})$ for each $i\geq 1$, we see that $\len(\Tor_i^R(M,M^{\ast}))=\len(\Tor_{i+1}^R(M,M^{\ast}))$ for each $i\geq 1$. In particular $\theta^R(M,M^{\ast})=0$, so the result follows from Corollary \ref{extend}.
\end{proof}

\subsection*{Further remarks related to Conjecture \ref{HWC}} 

Huneke and Wiegand \cite[5.2]{HW1} proved that, if $R$ is a local domain satisfying Serre's condition $(S_2)$, and $R_{\fp}$ is a hypersurface for each height-one prime ideal $\fp$ of $R$, then $M\otimes_RM^{\ast}$ is not reflexive for each non-free, torsion-free module $M\in \md R$. This result was motivated by a theorem of Auslander \cite[3.3]{Au} which justifies Conjecture \ref{HWC} over normal domains: if $R$ is a local normal domain and $M\in \md R$ is a non-free, torsion-free $R$-module, then $M\otimes_{R}M^{\ast}$ is not reflexive. We will see in Proposition \ref{Augen} that both of these results hold more generally. First we need:

\begin{lem} \label{lemHWrmk} Let $R$ be a local ring, and let $M\in \md R$ be a module such that $M^{\ast}\neq 0$. If $\Ext^1_R(\Tr M, M^{\ast})=\Ext^2_R(\Tr M, M^{\ast})=0$, then $M$ is free.
\end{lem}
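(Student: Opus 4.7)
The plan is to exploit the Auslander--Bridger four-term exact sequence: for each $N \in \md R$,
\begin{equation*}
0 \to \Ext^1_R(\Tr M, N) \to M \otimes_R N \xrightarrow{\Theta_N} \Hom_R(M^*, N) \to \Ext^2_R(\Tr M, N) \to 0,
\end{equation*}
with $\Theta_N(m \otimes n)$ the $R$-linear map $\phi \mapsto \phi(m)\,n$. Setting $N = M^*$ and invoking the two hypothesized Ext vanishings yields an isomorphism
\[
\Theta := \Theta_{M^*} \colon M \otimes_R M^* \xrightarrow{\cong} \Hom_R(M^*, M^*),
\]
and my plan is to extract freeness of $M$ by constructing an idempotent $\mu \in \Hom_R(M,M)$ (via surjectivity of $\Theta$) whose image is free and whose kernel vanishes (via injectivity of $\Theta$ combined with Nakayama).

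For the construction I would lift $\id_{M^*}$ to a unique $\tau = \sum_{i=1}^n m_i \otimes \phi_i \in M \otimes_R M^*$, which unpacks as the dual-basis-style identity $\psi = \sum_i \psi(m_i)\,\phi_i$ for every $\psi \in M^*$. Defining $\Phi \colon M \to R^n$ by $x \mapsto (\phi_i(x))_i$ and $\Psi \colon R^n \to M$ by $(r_i) \mapsto \sum r_i m_i$, the composite $\mu := \Psi \circ \Phi$ then satisfies $\phi \circ \mu = \phi$ for every $\phi \in M^*$, and a short computation using the displayed identity shows that $\mu$ and $B := \Phi \circ \Psi \in \Hom_R(R^n,R^n)$ are both idempotent. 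The idempotent $B$ splits $R^n = \im B \oplus \ker B$; since $R$ is local, the projective summand $\im B$ is free. The restrictions of $\Phi$ and $\Psi$ give mutually inverse isomorphisms $\im \mu \cong \im B$, so $\im \mu$ is free.

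Idempotency of $\mu$ then gives the decomposition $M = \im \mu \oplus \ker \mu$. The relation $\phi \circ \mu = \phi$ says that every $\phi \in M^*$ vanishes on $\ker \mu$, so for any $z \in \ker \mu$ and any $\phi \in M^*$ the element $\Theta(z \otimes \phi)$ sends each $\psi \in M^*$ to $\psi(z)\,\phi = 0$. Thus $\Theta$ annihilates the direct summand $\ker \mu \otimes_R M^* \subseteq M \otimes_R M^*$, and injectivity of $\Theta$ forces $\ker \mu \otimes_R M^* = 0$. Since $M^* \neq 0$ and $R$ is local, $M^*/\fm M^* \neq 0$; Nakayama applied to the finitely generated module $\ker \mu$ then yields $\ker \mu = 0$, so $M = \im \mu$ is free.

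The main technical work is the idempotent bookkeeping---verifying $\mu^2 = \mu$ and $B^2 = B$ directly from the dual-basis identity, and identifying $\im \mu$ with $\im B$ via the restrictions of $\Phi$ and $\Psi$---after which the conclusion is formal. Note that the hypothesis $M^* \neq 0$ is invoked only at the final Nakayama step, while both the surjectivity and injectivity of $\Theta$ are genuinely needed.
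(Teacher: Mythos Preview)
Your argument is correct: the Auslander--Bridger sequence with $N=M^{\ast}$ does give the isomorphism $\Theta\colon M\otimes_R M^{\ast}\xrightarrow{\cong}\Hom_R(M^{\ast},M^{\ast})$, the idempotent bookkeeping checks out (the key identity $\phi\circ\mu=\phi$ really does force $\mu^2=\mu$ and $B^2=B$), and the final Nakayama step is clean once you observe that $\ker\mu\otimes_R M^{\ast}$ sits as a direct summand of $M\otimes_R M^{\ast}$ and is annihilated by the injective map $\Theta$.

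The paper takes a different and somewhat shorter route. It uses the four-term exact sequence $0\to M^{\ast}\to F\to G\to\Tr M\to 0$ coming from the definition of the transpose, breaks it into two short exact sequences $0\to M^{\ast}\to F\to L\to 0$ and $0\to L\to G\to\Tr M\to 0$, and reads off $\Ext^1_R(L,M^{\ast})=0$ from the hypothesis $\Ext^2_R(\Tr M,M^{\ast})=0$. Hence the first sequence splits and $M^{\ast}$ is free; then $\Ext^1_R(\Tr M,R)=\Ext^2_R(\Tr M,R)=0$ follows immediately, so $M$ is reflexive and $M\cong M^{\ast\ast}$ is free. This avoids the explicit idempotent construction entirely and isolates the intermediate fact that $M^{\ast}$ is free, which your argument does not surface directly. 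On the other hand, your approach makes transparent exactly how surjectivity of $\Theta$ produces a free summand and how injectivity kills the complement, and it never leaves the evaluation-map framework---a trade of brevity for conceptual explicitness.
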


\begin{proof} There is an exact sequence $0 \to M^{\ast} \to F \to G \to \Tr M \to 0$, where $F, G \in \md R$ are free modules. This yields the following short exact sequences:  
\begin{equation}\tag{\ref{lemHWrmk}.1}
0 \to M^{\ast} \to F \to L \to 0 \;\; \text{  and  } \;\; 0 \to L \to G \to \Tr M \to 0.
\end{equation}
Note $\Ext^1_R(L,M^{\ast})=0$ so $M^{\ast}$ is free. Hence we have $\Ext^1_R(\Tr M, R)=\Ext^2_R(\Tr M, R)=0$, i.e., $M$ is reflexive. This implies $M$ is free.
\end{proof}
% from \cite[2.8]{AuBr} 
\begin{prop} \label{Augen} Let $R$ be a local ring satisfying Serre's condition $(S_2)$ and let $M\in \md R$ be a module such that $M^{\ast}\neq 0$ and $M\otimes_RM^{\ast}$ is reflexive. Then $M$ is free if one of the following conditions holds:
\begin{enumerate}[\rm(i)]
\item $M_{\fp}$ is free over $R_{\fp}$ for all $\fp \in \Spec(R)$ with $\dim(R_{\fp})\leq 1$.
\item $M$ has rank, and $R_{\fp}$ is a hypersurface for all $\fp \in \Spec(R)$ with $\dim(R_{\fp})=1$.
\end{enumerate}
\end{prop}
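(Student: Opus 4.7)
The plan is to apply Lemma~\ref{lemHWrmk} by proving that $E_1 := \Ext_R^1(\Tr M, M^*) = 0$ and $E_2 := \Ext_R^2(\Tr M, M^*) = 0$. The main tool is Auslander's natural four-term exact sequence (see \cite{AuBr})
\begin{equation*}
0 \to E_1 \to M \otimes_R M^* \to \Hom_R(M^*, M^*) \to E_2 \to 0,
\end{equation*}
combined with the tensor-hom adjunction $\Hom_R(M^*, M^*) \cong (M \otimes_R M^*)^*$. Since $R$ satisfies $(S_2)$, every $R$-dual is reflexive; writing $X := M \otimes_R M^*$ we therefore have an exact sequence $0 \to E_1 \to X \to X^* \to E_2 \to 0$ in which both $X$ (by hypothesis) and $X^*$ are reflexive, hence each satisfies the module-theoretic $(S_2)$-condition $\depth(-)_\fp \geq \min(\dim R_\fp, 2)$ at every $\fp$ in its support.

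Under hypothesis (i), freeness of $M_\fp$ at all primes of dimension $\leq 1$ forces $(\Tr M)_\fp = 0$ there, so $\Supp(E_1), \Supp(E_2) \subseteq \{\fp : \dim R_\fp \geq 2\}$. The $(S_2)$-condition on $X$ forces $\Ass(X) \subseteq \Min(R)$; since $E_1 \hookrightarrow X$, we get $\Ass(E_1) \subseteq \Ass(X) \cap \Supp(E_1) \subseteq \Min(R) \cap \{\dim R_\fp \geq 2\} = \emptyset$, hence $E_1 = 0$. With $E_1 = 0$ the sequence reduces to $0 \to X \to X^* \to E_2 \to 0$; for $\fp \in \Supp(E_2)$ we have $\dim R_\fp \geq 2$, so $\depth X_\fp, \depth X^*_\fp \geq 2$, and the depth lemma yields
\begin{equation*}
\depth (E_2)_\fp \geq \min\bigl(\depth X^*_\fp,\ \depth X_\fp - 1\bigr) \geq 1.
\end{equation*}
This rules out any associated prime of $E_2$, so $E_2 = 0$, and Lemma~\ref{lemHWrmk} concludes that $M$ is free.

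Under hypothesis (ii), the plan is to reduce to (i) by showing that $M_\fp$ is free for every $\fp$ with $\dim R_\fp \leq 1$. The case $\dim R_\fp = 0$ is immediate from $M$ having rank. For $\fp$ with $\dim R_\fp = 1$, let $T = \tp{R_\fp}{M_\fp}$. As $R_\fp$ is $(S_1)$, $(M_\fp)^* \cong (M_\fp/T)^*$; and since $X_\fp$ is torsion-free while the image of $T \otimes_{R_\fp} (M_\fp)^*$ in $X_\fp$ is torsion, we obtain $X_\fp \cong (M_\fp/T) \otimes_{R_\fp} (M_\fp/T)^*$. The module $M_\fp/T$ is torsion-free of positive rank over the one-dimensional hypersurface $R_\fp$ and its tensor with its dual is reflexive, so \cite[3.7]{HW1} forces $M_\fp/T$ to be free. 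Splitting $0 \to T \to M_\fp \to M_\fp/T \to 0$ yields $M_\fp \cong T \oplus R_\fp^r$; then $X_\fp \cong T^r \oplus R_\fp^{r^2}$, and since $T^* = 0$ implies $(T^r)^{**} = 0$, reflexivity of $X_\fp$ forces $T = 0$. Hence $M_\fp$ is free, and hypothesis (i) now applies.

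The principal difficulty lies in the vanishing of $E_2$: unlike $E_1$, it appears as a quotient rather than a submodule of a reflexive module, so one cannot conclude directly from $\Ass(X) \subseteq \Min(R)$. Instead one must exploit the two-sided reflexivity of $X$ and its $R$-dual $X^*$ together with the depth lemma; this interplay is the decisive use of the $(S_2)$ hypothesis on $R$ and is what extends Auslander's normal-domain argument to the present level of generality.
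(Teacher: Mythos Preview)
Your argument follows the same strategy as the paper: for (i) you unpack Auslander's four–term sequence and feed the vanishing of $E_1,E_2$ into Lemma~\ref{lemHWrmk}, and for (ii) you reduce to (i) by proving local freeness in codimension~$\le 1$. The paper's proof of (ii) invokes Corollary~\ref{extend2} directly (which already handles possible torsion in $M_\fp$), whereas you pass to $M_\fp/T$, apply \cite[3.7]{HW1}, and then kill $T$; both routes are fine.

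One small inaccuracy: the assertion ``since $R$ satisfies $(S_2)$, every $R$-dual is reflexive'' is not true in general (it requires in addition that $R$ be Gorenstein in codimension~$1$; over a non-Gorenstein Artinian local ring, for instance, $k^{*}$ need not be reflexive). Fortunately your proof does not actually use reflexivity of $X^{*}$: what you need is only that $X^{*}$ satisfies the depth condition $\depth X^{*}_\fp\ge\min(2,\dim R_\fp)$, and this holds for any $R$-dual over an $(S_2)$ ring because $\Hom_R(-,R)$ sits as a kernel in a two-term free complex, giving $\depth\Hom_{R_\fp}(A_\fp,R_\fp)\ge\min(2,\depth R_\fp)$. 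So the argument stands once you replace ``reflexive'' by ``satisfies $(S_2)$'' for $X^{*}$.
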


\begin{proof} For part (i), one can show that $\Ext^1_R(\Tr M, M^{\ast})=0=\Ext^2_R(\Tr M, M^{\ast})$ by proceeding as in the proof of  \cite [3.3]{Au}. Hence the claim follows from Lemma \ref{lemHWrmk}.

For part (ii), notice, since $M^{\ast}\neq 0$ and the rank of $M$ is positive, it follows $M_{\fq}\neq 0 \neq M^{\ast}_{\fq}$ for all $\fq \in \Spec(R)$. Now let $\fp \in \Spec(R)$ with $\dim(R_{\fp})=1$. Then $M_{\fp} \otimes_{R_{\fp}} M_{\fp}^{\ast}\neq 0$ so that $M_{\fp}$ is free over $R_{\fp}$; see Corollary \ref{extend2}. Now the result follows from part (i).
\end{proof}
%It is enough to prove that $\Ext^1_R(\Tr M, M^{\ast})=\Ext^2_R(\Tr M, M^{\ast})=0$; see  Lemma \ref{lemHWrmk}. 
%One can prove prove that $\Ext^1_R(\Tr M, M^{\ast})=\Ext^2_R(\Tr M, M^{\ast})=0$ by using the proof of  \cite [3.3]{Au}. Hence the claim follows from Lemma \ref{lemHWrmk}.
% and the exact sequence $0\to \Ext^1_R(\Tr M, M^{\ast}) \to M\otimes_RM^{\ast} \to \Hom_R(M^{\ast}, M^{\ast}) \to \Ext_R^2(\Tr M, M^{\ast})\to 0$.

%\begin{chunk} \label{HWA} Let $R$ be a local ring satisfying Serre's condition $(S_2)$ and let $M\in \md R$ be a module such that $M$ has rank and $M^{\ast}\neq 0$. If $M\otimes_RM^{\ast}$ is reflexive and $R_{\fp}$ is a hypersurface for each height-one prime ideal $\fp$ of $R$, then $M$ is free.
%\end{chunk} 

%\begin{proof} Assume $R_{\fp}$ is a hypersurface for each height-one prime ideal $\fp$ of $R$, and $M\otimes_RM^{\ast}$ is reflexive. It follows that $M_{\fq}\neq 0 \neq M^{\ast}_{\fq}$ for each prime ideal $\fq$ of $R$. Now let $\fp$ be a height-one prime ideal of $R$. Then $M_{\fp} \otimes_{R_{\fp}} M_{\fp}^{\ast}\neq 0$ so that $M_{\fp}$ is free over $R_{\fp}$; see Corollary \ref{extend2}. Now the result follows from Corollary \ref{Augen}.
%\end{proof}

We finish this section by recording a few observations about \emph{Ulrich} ideals related to our argument. We refer the reader to \cite{GotoUlrich} for the definition and basic properties of Ulrich ideals. For our purpose, we note:

\begin{chunk} \label{Ulrichrmk} If $R$ is a Gorenstein ring and $I$ is an Ulrich ideal of $R$, then  $\cx_R(I)\leq 1$; see \cite[7.4]{GotoUlrich}.
\end{chunk}

\begin{cor} \label{Ulrich} Let $R$ be a one-dimensional complete intersection domain and let $I$ be an Ulrich ideal of $R$. Then $R/I$ is Tor-rigid. Moreover, if $M \in \md R$ is a module that has torsion, then $M\otimes_RI$ has torsion.%with $\depth(M)=0$, then $\depth(M\otimes_RI)=0$.
\end{cor}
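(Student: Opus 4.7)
The plan has two steps. First, I would deduce Tor-rigidity of $R/I$ by applying the Tor-rigidity assertion of Corollary \ref{coryeni} to every pair $(M, R/I)$ with $M \in \md R$. The required hypotheses on $N = R/I$ are: $\CI_R(R/I) < \infty$, which is immediate since $R$ is a complete intersection; $\cx_R(R/I) = \cx_R(I) \leq 1$, since $I = \Omega_R(R/I)$ and \ref{Ulrichrmk} bounds the complexity of an Ulrich ideal in a Gorenstein ring; and $R/I$ is locally free on $\Ass(R) = \{(0)\}$, because the Ulrich ideal $I$ contains a parameter ideal as a reduction, hence a non-zero-divisor, so $(R/I)_{(0)} = 0$. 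The condition on $M$ --- that $[M] = 0$ in $\overline{\G}(R)_\QQ$ --- follows from Remark \ref{rmkyeni} since $R$ is a one-dimensional domain and every $M \in \md R$ then has rank.

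For the second assertion, I would argue by contradiction. Supposing $M \otimes_R I$ is torsion-free, applying $M \otimes_R -$ to $0 \to I \to R \to R/I \to 0$ exhibits an inclusion $\Tor_1^R(M, R/I) \hookrightarrow M \otimes_R I$. As $\Tor_1^R(M, R/I)$ is annihilated by $I$ and $I$ contains a non-zero-divisor, this Tor is torsion, and hence vanishes. The Tor-rigidity of $R/I$ just established then gives $\Tor_i^R(M, R/I) = 0$ for every $i \geq 1$, so by \ref{AGProp}(viii) the depth formula $\depth(M) + \depth(R/I) = \depth(R) + \depth(M \otimes_R R/I)$ holds. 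With $\depth(R) = 1$, $\depth(R/I) = 0$ (as $\dim(R/I) = 0$), and $\depth(M) = 0$ (as $M$ has torsion), the equality forces $M \otimes_R R/I = 0$, i.e., $M = IM$. Nakayama's lemma gives $M = 0$, contradicting the presence of torsion in $M$.

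The main subtlety lies in marshalling the right ingredients for Corollary \ref{coryeni}: the Ulrich condition enters exactly once, through \ref{Ulrichrmk}, to secure the complexity bound $\cx_R(R/I) \leq 1$, while the domain hypothesis is used to make the class of every finitely generated $M$ vanish in $\overline{\G}(R)_\QQ$. Once both inputs are in place, Tor-rigidity follows as a formal consequence of \ref{coryeni}, and the torsion assertion reduces to the short depth-formula argument above.
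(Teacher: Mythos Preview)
Your argument is correct and, for the Tor-rigidity of $R/I$, follows exactly the paper's intended route: verify the hypotheses of Corollary~\ref{coryeni} for $N=R/I$ (using \ref{Ulrichrmk} for the complexity bound) and invoke its Tor-rigidity conclusion.

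For the second assertion your proof works but takes a longer path than the paper intends. The one-line proof in the paper has you apply Corollary~\ref{coryeni} with $N=I$ rather than $N=R/I$: the hypotheses on $I$ are immediate ($\cx_R(I)\le 1$ by \ref{Ulrichrmk}, $I_{(0)}$ is free, $[M]=0$ in $\overline{\G}(R)_\QQ$ since $R$ is a domain), and the first conclusion of \ref{coryeni} says directly that if $M\otimes_R I$ is nonzero and torsion-free then $M$ is torsion-free --- the contrapositive is precisely the claim. Your detour through $\Tor_1^R(M,R/I)=0$, Tor-rigidity, and the depth formula is a valid self-contained alternative, but it reproves with extra effort what \ref{coryeni} already hands you. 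One small wording point: rather than appealing to a parameter-ideal reduction (whose existence can depend on the residue field), it is cleaner to note that Ulrich ideals are $\fm$-primary by definition, hence contain a non-zero-divisor in a one-dimensional Cohen--Macaulay domain.
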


\begin{proof} This claim follows from Corollary \ref{coryeni} and \ref{Ulrichrmk}.
%Remark \ref{rmktorsion}, Proposition \ref{propAA} and \ref{Ulrichrmk}.
\end{proof}

\begin{eg} \label{ExUlrich} Let $R = \CC[\![t^4, t^5, t^6]\!]=\CC[\![x,y,z]\!]/(xz-y^2, x^3-z^2)$ and $I=(t^4, t^6)$. Then $R$ is a one-dimensional complete intersection domain, and $I$ is an Ulrich ideal of $R$; see \cite[6.3]{GotoUlrich}. Hence, $R/I$ is Tor-rigid, and $I\otimes_{R}M$ has torsion for each module $M\in \md R$ that has torsion; see Corollary \ref{Ulrich}. Also, letting $J=(t^4, t^5)$, we have that $I\otimes_{R}J$ is torsion-free, i.e., $\Tor_2^R(R/I, R/J)=0$; see \cite[4.3]{HW1}. So we conclude that $\Tor_i^R(I,J)=0$ for all $i\geq 1$.
\end{eg}

Notice, \ref{Ulrichrmk}, in conjuction with Corollary \ref{extend2}, establishes Example \ref{propA}(ii) over complete intersection rings. In fact this result is true over Gorenstein rings that are not necessarily complete intersections. This fact can be shown as follows:

\begin{rmk}\label{kanitla} Let $(R, \fm)$ be a one-dimensional Gorenstein local ring, and let $I$ be an Ulrich ideal of $R$ that contains a parameter ideal $\fq$ as a reduction (in particular, $I$ is an $\fm$-primary ideal). Note $I$ is generated by two elements; see \cite[2.6(b)]{GotoUlrich}. Since $I^2=qI$, there is an exact sequence $0 \to \fq/I^2 \to I/I^2 \to I/\fq \to 0$, where $I/I^2 \cong (R/I)^{\oplus 2}$, $\fq/I^2\cong R/I$ and $I/\fq  \cong R/I$. Thus the multiplicity of $I$ equals to $2 \cdot \len_R(R/I)=\len_R(I/I^2)$. Hence \cite[2.3]{ooishi} implies that $I$ is a self-dual $R$-module, i.e., $I \cong I^{\ast}$. Since $I$ contains a non zero-divisor on $R$ and $I$ is not principal, we see $I\otimes_RI^{\ast}\cong I\otimes_RI$ has torsion.
\end{rmk}

\section{Appendix A: on the vanishing of theta invariant}

Recall that, if $R$ is a one-dimensional reduced hypersurface ring, then $\theta^R(M,N)$ is defined and vanishes for all modules $M, N\in \md R$, either of which has rank. Since Theorem \ref{Main} relies upon the vanishing of $\theta$ pairing, we would like to find out whether $\theta$ can vanish nontrivially. More precisely, we would like to find out whether there is a one-dimensional reduced hypersurface ring $R$, and modules $M$ and $N$ over $R$ -- neither of which has rank -- such that $\theta^R(M,N)=0$. We were unable to find an example (or a result) from the literature that addresses our query. The aim of this section is to record such an example suggested to us by Hailong Dao; see Example \ref{Omoshiroi}. First, in \ref{Giro}, we will record a related fact that was shown to us by Mark Walker: over one-dimensional reduced local rings $R$, a module $M\in \md R$ has rank if and only if its class is zero in $\overline{G}(R)_{\QQ}$. A similar result that makes use of $\theta$ pairing is established in \cite[3.3]{Da1} for hypersurface rings.

\begin{chunk} \label{Giro} Let $R$ be a one-dimensional Cohen-Macaulay ring, and let $M\in \md R$. %Then the following hold:
\begin{enumerate}[\rm(i)]
\item There exists a rational number $r$ such that $\len_{R_{\p}}(M_{\p})=r \cdot \len_{}(R_{\p})$ for each $p\in \Ass(R)$ if and only if $[M]=0$ in $\overline{G}(R)_{\QQ}$.
\item Assume $R$ is reduced. Then $M$ has rank if and only if $[M]=0$ in $\overline{G}(R)_{\QQ}$.
\end{enumerate}
\end{chunk}

\begin{proof}
(i) Let $\Ass(R)=\{p_1, \ldots, p_n\}$ be the set of all minimal (associated) primes of $R$. Note that $\G(k)=\ZZ \cdot [k]$ and $\G(R_{p_j}) = \ZZ \cdot [k(p_j)]$, where $k(p_j)$ is the residue field of $R_{p_{j}}$, for all $j=1, \ldots, n$.

There is a right exact sequence of the form:
\begin{equation}\tag{\ref{Giro}.1}
G(k) \stackrel{\rm{\alpha}}{\lra} G(R) \stackrel{\rm{\beta}}{\lra} \bigoplus^{n}_{j=1} G(R_{p_j})  {\lra} 0.
\end{equation}
Here $\alpha$ is the natural inclusion with $\alpha([k])=[k]$ and $\beta([M])=\big(\len_{R_{\p_{j}}}(M_{\p_{j}})[k(p_j)] \big)_j$. In (\ref{Giro}.1), by identifying $\G(k)$ with $\ZZ$, and $\bigoplus^{n}_{j=1} G(R_{p_j})$ with $\ZZ^{\oplus n}$, we obtain another right exact sequence of the form
\begin{equation}\tag{\ref{Giro}.2}
\ZZ \stackrel{\rm{\alpha}}{\lra} G(R) \stackrel{\rm{\beta}}{\lra} \ZZ^{\oplus n} {\lra} 0,
\end{equation}
where $\alpha(1)=[k]$ and $\beta([M])=\big( \len_{R_{\p_{j}}}(M_{\p_{j}})  \big)_j$.
Applying $-\otimes_{\ZZ}\QQ$ to (\ref{Giro}.2), we see there is a right exact sequence of the form:
\begin{equation}\tag{\ref{Giro}.2}
\QQ \stackrel{\rm{\alpha\otimes_{}1}}{\lra} G(R)_{\QQ} \stackrel{\rm{\beta\otimes_{}1}}{\lra} \QQ^{\oplus n} {\lra} 0.
\end{equation}
Here $\alpha \otimes 1(1)=[k]$, which is zero in $G(R)_{\QQ}$. Hence $\alpha\otimes_{}1$ is the zero map so that $\beta\otimes_{}1$ is an isomorphism.

Consequently $[M]=0$ in $\overline{G}(R)_{\QQ}$ if and only if $[M]=r \cdot [R]$ for some rational number $r$ if and only if $\beta\otimes_{}1([M])= r \cdot \beta\otimes_{}1([R])$ if and only if $\len_{R_{p_j}}(M_{p_j}) = r \cdot \len(R_{p_j})$ for all $j=1, \ldots, n$.

%with the map on the right sending $[M]$ to the tuple of integers $(len(M_{p_j}))_j$ and the map on the left sending $1$ to $[k]$. Now, $[k]$ is torsion in $G(R)$ (often 0), and so upon tensoring with $\QQ$ we get
%$$G(R)_\QQ \cong \bigoplus_i \QQ$$
%with the same map as over. The class of $[R]$ is mapped to $(len R_{p_j})_j.$ If the class of $[M]$ in $\overline{G}(R)_\QQ$ is  0 then in $G(R)_\QQ$ it is *some rational number* $q$ times the class of $[R]$ in $G(R)_\QQ$. Thus $\len(M_{p_j}) = q len(R_{p_j})$
(ii) If $M$ has rank, then $[M]=0$ in $\overline{G}(R)_{\QQ}$; see Remark \ref{rmkyeni}. To see the converse, let $p \in \Ass(R)$. Then, by (i), we have $\len_{R_{\p}}(M_{\p})=r \cdot \len_{}(R_{\p})$ for some rational number $r$. Since $M_{\p}\cong R^{\oplus n}_{\p}$ for some positive integer $n$, we see $n=r$ and hence $M$ has rank $r$.
\end{proof}

The next example shows that the conclusion of \ref{Giro}(ii) can fail if $R$ is not reduced.  It also shows that Conjecture \ref{HWC} would fail if the module $M$ in question does not have rank, even if $[M]=0$ in $\overline{G}(R)_{\QQ}$.

\begin{eg} Let $R=\CC[\![x,y]\!]/(x^2)$ and let $M=R/(x)$. Then $M \cong M^{\ast}$, and so $M$ is torsion-free. The exact sequence $0 \to M \to R \to M \to 0$ implies that $[M]=0$ in $\overline{G}(R)_{\QQ}$. Moreover, $M$ is not locally free on $\Ass(R)$; in particular $M$ does not have rank. Note also that $\Tor^R_i(M,M^{\ast})\cong M$ for all $i\geq 0$, and hence $\len(\Tor^R_i(M,M^{\ast}))=\infty$ for all $i\geq 0$.
\end{eg}

%\begin{eg} $R=k[\![x,y]\!]/(xy)$ and let $M,N \in \md R$. Then the following are equivalent
%\begin{enumerate}[\rm(i)]
%\item $\theta^R(M,N)=0$.
%\item $M$ and $N$ have both rank.
%\item $[M]=[N]=0$ in $\overline{G}(R)_{\QQ}$.
%\end{enumerate}
%\end{eg}

%\begin{proof}
%\end{proof}

%Motivated by the proof of Corollary \ref{HWcor} and \ref{Giro}, we point out that $\theta(M,N)$ can vanish even if $M$ and $N$ do not have rank over a one-dimensional reduced hypersurface ring: 

Here is an example we seek on the vanishing of $\theta$ invariant.% over one-dimensional reduced hypersurfaces.
		
\begin{eg} \label{Omoshiroi} Let $R=\CC[\![x,y]\!]/(xy(x-y))$, $M= R/(x)$ and $N = M \oplus R/(y) \oplus R/(y)$. Then $R$ is a one-dimensional reduced hypersurface ring, and $M, N\in \md R$ are non-free, torsion-free modules. 

The minimal free resolution of $M$ is given by:
$$F=\xymatrix{\cdots \ar[r]^{(x-y)y} & R \ar[r]^{x} &  R \ar[r]^{(x-y)y}& 
R  \ar[r]^{x} & R \ar[r] & 0}.$$
Thus $\Tor_1^R(M,M) \cong k[\![y]\!]/(y^2)$ and $\Tor_2^R(M,M)=0$ so that $\theta^R(M,M)=-2$. 
Similarly one can check $\Tor_1^R(R/(y),R/(y)) \cong k[\![x]\!]/(x^2)$ and $\Tor_2^R(R/(y),R/(y))=0$. So it follows $\theta^R(R/(y),R/(y))=-2$. Tensoring $F$ with $R/(y)$, we see $\Tor_2^R(M,R/(y))\cong k$ and $\Tor_1^R(M,R/(y))=0$. This yields $\theta^R(M,R/(y))=1$. 

Therefore we have $\theta^R(N,N)=-6$ and $\theta^R(M,N)=\theta^R(M,M)+2\theta^R(M,R/(y))=0$.
Note that, since $\theta^R(M,M)\neq 0$ and $\theta^R(N,N)\ne 0$, neither $M$ nor $N$ has rank.
\end{eg}

\begin{rmk} \label{rmk74} It seems interesting that, contrary to Example \ref{Omoshiroi}, over certain reduced hypersurface rings, $\theta(M,N)$ can vanish only when $M$ and $N$ have rank. For example, if $R=\CC[\![x,y]\!]/(xy)$ and $M,N\in \md R$, then one can check that $\theta^R(M,N)$ vanishes if and only if $M$ and $N$ both have rank. Note, by \ref{Giro}, one concludes for this particular hypersurface ring $R$, and modules $M$ and $N$ that, $\theta^R(M,N)=0$ if and only if $[M]=[N]=0$ in $\overline{G}(R)_{\QQ}$ if and only if $M$ and $N$ both have rank.
\end{rmk}

\section{Appendix B: some examples of torsion-free tensor products} In this section we recall that Conjecture \ref{HWC} may fail if one considers the tensor product $M\otimes_{R}M$ instead of $M\otimes_{R}M^{\ast}$. Huneke and Wiegand showed that, if $R$ is a one-dimensional local domain that is not Gorenstein, then there exists a torsion-free module $M\in \md R$ such that $M$ is not free and $M\otimes_{R}M$ is torsion-free; see the proof of \cite[4.7]{HW1}. However their argument seems to not yield an explict example of such a module $M$. Building on the proof of Huneke and Wiegand, we will point out how to construct torsion-free modules $M$ with rank such that $M\otimes_{R}M$ is torsion-free over certain one-dimensional local rings $R$. 
%It is known, and easy to see that, the torsion condition proposed in Conjecture \ref{HWC} fails in case $R$ has dimension at least two, or the module in question does not have rank; see \cite{CeRo}. The conjecture also fails if one considers the tensor product $M\otimes_{R}M$ instead of $M\otimes_{R}M^{\ast}$.

\begin{chunk} \label{Appendix2main} Let $R$ be a one-dimensional Cohen-Macaulay local ring with a canonical module $\omega$. Set $M=\Tr\Omega\Tr\Omega\omega$. If $R$ is generically Gorenstein but not Gorenstein, then $M$ is a non-free, torsion-free $R$-module with rank such that $M \otimes_{R} M$ is torsion-free.
\end{chunk}

\begin{proof} It follows from \cite[2.21]{AuBr} that there is an exact sequence of the form:
\begin{equation}\tag{\ref{Appendix2main}.1}
0 \to F \to M\oplus G \to \omega \to 0,
\end{equation}
where $F, G \in \md R$ are free modules. In particular, $M$ and $M^{\ast}$ are torsion-free modules such that $M$ has rank and $M^{\ast}$ is nonzero. As syzygy modules are torsionless, we have $\Ext^1_R(M,R)=0$. It follows $M\otimes_{R}\omega $ is torsion-free, and the sequence (\ref{Appendix2main}.1) does not split; see \cite[B.4]{Tachikawa} and \cite[2.5]{AR2017}. Now tensoring (\ref{Appendix2main}.1) with $M$, we see $M\otimes_{R}M$ is torsion-free. 
\end{proof}

%\begin{rmk} One can also see that the module $M$ in Proposition \ref{Appendix2main} is torsion-free as follows: As R is generically Gorenstein, it follows that $\omega$ is a first syzygy. So $\Omega \omega$ is a second syzygy module.  Now we have $\Ext_R^2(\Tr \Omega \omega,R)=0$; see, for example, \cite[Theorem 42]{Vla}. In other words, $\Ext_R^1(\Tr M,R)=0$ so that $M$ is torsion-free.
%\end{rmk}

Modules yielding torsion-free tensor products as in \ref{Appendix2main} can also be obtained without appealing to the short exact sequence involving the transpose. Such a module can be realized as the \emph{pushforward} of the first syzgy of the canonical module of the  ring $R$. We observe this below by including a few additional details to the argument of \cite[4.7]{HW1}.

%by following the argument of \cite[4.7]{HW1}; here we include a few additional details to the argument of \cite{HW1} and remark that the module in question comes as the \emph{pushforward} of the first syzgy of the canonical module of the ring $R$.

\begin{chunk} (see \cite[4.7]{HW1}) \label{PFrmk} Let $M\in \md R$ and let $\pi: F \to M^{\ast}$ be a minimal free presentation of $M^{\ast}$. Denote $\mu: M \to F^*$ by the composition of the natural map $\delta_M:M \to M^{**}$ and $\pi^*: M^{**} \to F^*$. Then $\mu ^*$ is surjective, and the cokernel of $\mu$, denoted by $\PF(M)$, is called the pushforward of $M$ (pushforward is unique up to free summands; see, for example, \cite{Ce}). 

Now assume $M$ is torsionfree and $\Ext_R^1(M, R)\neq 0$. Take a minimal generating set $\alpha_1, \ldots, \alpha_t$ of $\Ext_R^1(M, R)$. Then each $\alpha_i$ represents a short exact sequence of the form $0 \to R \to N_i \to M \to 0$. Let $\alpha: 0 \to R^{\oplus t} \to N \to M \to 0$ be a pullback of the 
short exact sequence $\oplus_{i=1}^t \alpha_i: 0 \to R^{\oplus t} \to \oplus_{i=1}^tN_i \to M^{\oplus t} \to 0$ by the diagonal map $\Delta: M \to M^{\oplus t}$. 
%(Here $\oplus_{i=1}^t \alpha_i: 0 \to R^{\oplus t} \to \oplus_{i=1}^tN_i \to M^{\oplus t} \to 0$ is the direct sum of the short exact sequences $ \alpha_i$).
Then $\alpha = (\alpha_1, \cdots, \alpha_t) \in \Ext_R^1(M, R^{\oplus t}) \cong \Ext_R^1(M, R)^{\oplus t}$.
Next consider the induced exact sequence: $0 \to M^* \to N^* \to (R^t) ^* \xrightarrow{\alpha} \Ext_R^1(M, R) \to \Ext_R^1(N, R) \to 0$. Since the map $(R^t) ^* \xrightarrow{\alpha} \Ext_R^1(M, R)$ is surjective, we see that $\Ext_R^1(N, R)=0$. Thus, in the following pullback diagram, $W$, being a direct sum of $R^{\oplus s} $ and $R^{\oplus t}$, is free. So the vanishing of $\Ext_R^1(N, R)$ shows that $N=\PF(\Omega M)$.
\begin{center}
\pushQED{\qed} 
$$
\begin{CD}
@.  @. 0 @. 0 @. \\
@. @. @AAA @AAA \\
0 @>>> R^{\oplus t} @>>> N @>>> M @>>> 0 \\
@. @| @AAA @AAA @. \\
0 @>>> R^{\oplus t} @>>> W @>>> R^{\oplus s} @>>> 0 \\
@. @. @AAA @AAA @. \\
@.  @. \Omega M @= \Omega M @.\\
@. @. @AAA @AAA \\
@. @. 0 @. 0
\end{CD}
$$
\end{center}
\vspace{0.1in}
Now, if $R$ is as in Proposition \ref{Appendix2main} and $M=\omega$, it follows that $N\otimes_RN$ is torsion-free. \qedhere
\popQED
\end{chunk}

The observation in \ref{Appendix2main} raises the following question; an affirmative answer to it yields a counterexample to Conjecture \ref{HWC}.

\begin{ques} \label{bilinmez} Is there a one-dimensional, generically Gorenstein, Cohen-Macaulay local ring $R$ with a canonical module $\omega$ such that $\omega \ncong R$ and $(\Tr\Omega\Tr\Omega \omega)^{\ast} \cong \Tr\Omega\Tr\Omega \omega$?
\end{ques}

In the next example we record a non-free, torsion-free module $N$ over a one-dimensional local domain $R$, where $N \otimes_{R} N$ is torsion-free, but $N \otimes_{R} N^{\ast}$ has torsion. 

\begin{eg} Let $R=\CC[\![t^3,t^4, t^5]\!]=\CC[\![x,y,z]\!]/(y^2-xz, x^3-yz, x^2y-z^2)$. Then $R$ is a one-dimensional local domain which is not Gorenstein, and let $N\in \md R$ be the module given by the following exact sequence:
$$\xymatrix@C=0.9cm{ 
R^{\oplus 3} \ar[rrr]^-{\left[\begin{array}{ccc} -y & x & z \\
x^2 & -z & -xy  \\ -z & y & x^2  \end{array}\right]} & 
& &  R^{ \oplus 3}   \ar[r]
 & N \ar[r]
&0.}$$
One can check, for example, by using Macaulay 2 \cite{MC2}, that both $N$ and $N \otimes_{R} N$ are torsion-free. Moreover, it follows that $N\otimes_R N^{\ast}$ has torsion; see \cite[3.6]{HSW}.
\pushQED{\qed} 
 \qedhere
\popQED	
\end{eg}

\section{Acknowledgments}
The author would like to thank Tokuji Araya, Hailong Dao, Mohsen Gheibi, Shiro Goto, Li Liang, Hiroki Matsui, W. Frank Moore, Greg Piepmeyer, Arash Sadeghi, Ryo Takahashi, Naoki Taniguchi, Mark Walker and Yongwei Yao for their valuable comments, suggestions and useful discussions related to the manuscript.

%\bibliography{a}
%\bibliographystyle{amsplain}
\end{document}